\documentclass[10pt]{amsart} 
\textwidth=13.5cm 
\baselineskip=17pt 
\usepackage{graphicx,latexsym,bm,amsmath,amssymb,verbatim,multicol,lscape}
\vfuzz2pt 
\hfuzz2pt 
\newtheorem{thm}{Theorem} [section]
\newtheorem{cor}[thm]{Corollary}
\newtheorem{lem}[thm]{Lemma}

\theoremstyle{definition}
\newtheorem{defn}[thm]{Definition}
\theoremstyle{remark}

\numberwithin{equation}{section}

\begin{document}
\title{The universal Kummer congruences}%
\author{Shaofang Hong}
\address{Yangtze Center of Mathematics,  Sichuan University, Chengdu 610064, P.R. China
and Mathematical College, Sichuan University, Chengdu 610064, P.R. China}
\email{sfhong@scu.edu.cn; s-f.hong@tom.com; hongsf02@yahoo.com}
\author{Jianrong Zhao}
\address{School of Economic Mathematics, Southwestern University of
Finance and Economics, Chengdu 610074, P.R. China}%
\email{mathzjr@foxmail.com}
\author{Wei Zhao}
\address{Science and Technology on Communication Security Laboratory,
Chengdu 610041, P.R. China}%
\email{zhaowei9801@163.com}
\thanks{The research of Hong was supported partially by the National Science
Foundation of China Grant \# 10971145 and by the Ph.D. Programs Foundation
of Ministry of Education of China Grant \#20100181110073
}
\keywords{Universal Bernoulli number, universal Kummer congruence,
factorial, double factorial,
$p$-adic valuation, partition, reduced partition}%
\subjclass[2000]{Primary 11D79, 11B68, 11A07}
\date{\today}%
\dedicatory{Dedicated to Professor Qi Sun on the Occasion of his 75th Birthday}%
\begin{abstract}
Let $p$ be a prime. In this paper, we present a detailed $p$-adic
analysis to factorials and double factorials and their congruences.
We give good bounds for the $p$-adic sizes of the coefficients of
the divided universal Bernoulli number ${{\hat B_n}\over n}$ when
$n$ is divisible by $p-1$. Using these we then establish the
universal Kummer congruences modulo powers of a prime $p$ for the
divided universal Bernoulli numbers ${{\hat B_n}\over n}$ when $n$
is divisible by $p-1$. 
\end{abstract}
\maketitle

\section{Introduction}
Bernoulli numbers occur in many parts of number theory. Let $n\ge 0$
be an integer. Then the $n$-th {\it Bernoulli number} is defined by the
following formula:
\begin{align}\label{01}
\frac{t}{e^t-1}=\sum^{\infty }_{n=0}\frac{B_n}{n!}t^n.
\end{align}
From (\ref{01}) one can read that $B_1=-1/2$ and $B_n=0$ for all odd
$n>1$. The first few values for even $n$ are: $B_0=1, B_2=1/6,
B_4=-1/30, B_6=1/42,$ etc. The periodic behavior of the divided
Bernoulli numbers ${{B_n}\over n}$ is closely related to the
existence of a $p$-adic zeta function \cite{[Mu]}. The classical
Kummer congruences \cite{[Ku]} concern the congruence relations
among the divided Bernoulli numbers ${{B_n}\over n}$. In fact, they
state that if $p$ is a prime and $(p-1)\nmid n$ and $n\equiv m
\pmod {p-1}$, then $\frac{B_{n}}{n}\equiv\frac{B_{m}}{m}\pmod p$.
One can prove this congruence by means of $p$-adic measures and
$p$-adic integration \cite{[Y1], [Y2]}. In \cite{[BCRS]}, Baker et
al. established some global-local Kummer congruences. There are also
many other elegant and useful congruences, such as Wilson's theorem
\cite{[A4], [L]}, Fermat's little theorem \cite{[Ko], [L]},
Wolstenholme's theorem \cite{[Z]}, Lucas' congruence
\cite{[Ro]} and Glaisher's congruence \cite{[G1], [G2], [Ho]}.
In this paper, we will mainly be concerned with
the universal Kummer congruences.

In 1989, Clarke \cite{[C]} introduced the concept of universal
Bernoulli numbers. Assume that $c_{1}, c_{2}, \ldots $ are
indeterminates over ${\bf Q}$. Then let $F(t)=t+c_{1}{{t^{2}}\over 2}
+c_{2}{{t^{3}}\over 3}+\cdots$ and let $G(t)=F^{-1}(t)$ be the compositional
formal power series inverse of $F(t)$, namely $F(G(t))=G(F(t))=t$.
The {\it universal Bernoulli numbers} $\hat B_{n}$ are defined by
\begin{align}\label{02}
{t\over {G(t)}}=\sum_{n=0}^{\infty}\hat B_{n}{{t^{n}}\over {n!}}.
\end{align}
Obviously we have $\hat B_{n}\in {\bf Q}[c_{1},c_{2},\ldots ,c_{n}]$.
Actually $\hat B_{n}$ is a non-trivial {\bf Q}-linear combination of
all the monomials of weight $n$, where $c_{i}$ has weight $i$. So
$\hat{B_{n}}$ is the sum of $p(n)$ monomials, where $p(n)$ is the
partition function. Recently, Tempesta \cite{[T]} introduced the
universal higher-order Bernoulli polynomials and universal Bernoulli
$\chi$-numbers.

Substituting $c_{i}=(-1)^{i}$, we get $F(t)=\log(1+t)$ so that
$G(t)=e^{t}-1$ and we obtain the classical Bernoulli numbers
$B_{n}=\hat B_{n}$. Miller \cite{[Mi]} investigated the
specialization where $c_i$ is the equivalence class of the complex
projective space and proved that for this specialization, if $k$ is
odd and $k\ne 1$, then $\hat B_k/k\in L$, where $L$ is the {\it
Lazard ring}, a subring of ${\bf Q}[c_1, c_2, \ldots ]$. Clarke
\cite{[C]} showed that the divided universal Bernoulli number
${{\hat B_n}\over n}$ is $p$-integral if $(p-1)\nmid n$ which forms
part of his universal von Staudt theorem. Adelberg \cite{[A3]} set
up the universal Kummer congruences modulo a prime $p$ for the divided
universal Bernoulli numbers ${{\hat B_n}\over n}$ when $(p-1)\nmid n$.
Consequently Adelberg  \cite{[A4]} obtained the universal
Kummer congruences modulo powers of a prime $p$ for the case
$(p-1)\nmid n$. Adelberg, Hong and Ren \cite{[AHR]} established
the universal Kummer congruences modulo a prime $p$ for the divided
universal Bernoulli numbers when $(p-1)|n$. Besides, Adelberg, Hong
and Ren \cite{[AHR]} gave a simple proof to Clarke's 1989 universal
von Staudt theorem \cite{[C]} which generalized the theorems of
Dibag \cite{[D]}, Ray \cite{[R]}, Katz \cite{[Ka]} and Hurwitz
\cite{[Hu]}. It is natural to ask the question of establishing the
universal Kummer congruences modulo powers of a prime $p$ for the
divided universal Bernoulli numbers when $(p-1)|n$.

In the present paper, our main goal is to investigate the above problem.
We will exploit the universal Kummer congruence modulo powers of a
prime $p$ for the divided universal Bernoulli numbers for the
remaining case $(p-1)|n$. It is divided into two subcases: the even prime
$2$ and odd primes $p$ such that $(p-1)|n$. For this purpose, we
need to estimate $p$-adic valuations of the coefficients of the
divided universal Bernoulli number ${{\hat B_n}\over n}$ when $n$ is
divisible by $p-1$. This paper is organized as follows. In Section 2,
we present notations and some preliminary results. As a consequence, in
Section 3, we establish the universal Kummer congruences modulo
powers of odd primes $p$ when $(p-1)|n$. Consequently, in Section 4, we
treat the even prime $p=2$ case. We provide a detailed 2-adic analysis to
many kinds of factorials and double factorials. Finally, in Section 5,
we set up the universal Kummer congruences modulo powers of
2. Our result extends and strengthen the Adelberg-Hong-Ren theorem.
It also extends Clarke's theorem and complements Adelberg's modulo
prime powers result.

\section{Preliminaries}

If $u=(u_1,u_2,\ldots )\in \mathbf{N}^{\infty}$ with $u_i=0$ if $i$ is 
sufficiently large and $\omega(u):=\sum iu_i$, we identify $u$ with a 
partition of $\omega(u)$, where $u_i$ is the number of occurrences of the 
part $i$ in the partition. If $d(u):=\sum u_i$, then $d(u)$ is the number
of parts in the partition. We call $\omega(u)$ the {\it weight} of
$u$ and $d(u)$ the {\it degree} of $u$. If $u_i=0$ for $i>n$, we
write $u\in \mathbf{N}^n$. Let $k\ge 1$ and $N\ge 1$ be integers and
$l=k2^N$.

As usual, we let $v=v_p$ be the {\it normalized $p$-adic valuation}
of $\mathbf{Q}$, i.e., $v(a)=b$ if $p^b\mid a$ and $p^{b+1}\nmid a$. We can extend $v$ to
$\mathbf{Q}[c_1,c_2,\ldots ]$ by $v(\sum a_uc^u)=\min\{v(a_u)\}$ when
$u=(u_1,\ldots ,u_n)\in \mathbf{N}^n$ and $c_u=c_1^{u_1}\ldots c_n^{u_n}$.
By the Lagrange inversion \cite{[C]}, we
have

\begin{align}\label{1}
\frac{\widehat{B}_n}{n}=\sum_{\omega=n}\tau_uc^u,
\end{align}
where $u=(u_1,\ldots ,u_n)\in \mathbf{N}^n$, $\omega=\omega(u)$,
$d=d(u)$, $c^u=c_1^{u_1}\ldots c_n^{u_n}$, $\gamma
_u=2^{u_1}\ldots (n+1)^{u_n}u_1!\ldots u_n!$ and
\begin{align}\label{2}
\tau_u=\frac{(-1)^{d-1}(n+d-2)!}{\gamma_u}.
\end{align}

For any positive odd integer $a$, we define the {\it double
factorial} $a!!$ of $a$ by $a!!=\prod_{1\le k\le a, (2,k)=1}k.$ That
is, $a!!=a\cdot(a-2)\cdot \ldots \cdot 3\cdot1$. For a real number $x$,
define $\lceil x\rceil $ to be the least integer greater than $x$
and $\lfloor x\rfloor $ to be the greatest integer less than $x$.
Then $\lceil x\rceil +\lceil y\rceil \ge \lceil x+y\rceil $, $\lceil
n+x\rceil =n+\lceil x\rceil $, $\lfloor x\rfloor +\lfloor y\rfloor
\le \lfloor x+y\rfloor $ and $\lfloor n+x\rfloor =n+\lfloor x\rfloor
$ for any real numbers $x$ and $y$ and any integer $n$.

We will freely use the standard results listed in the following
Lemma \ref{A3 lemma 2.0}.
\begin{lem}
\label{A3 lemma 2.0} We have
\begin{align}
v((ab)!)&\geq v(a!)+v(b!). \label{eqn 0}\\
  v((lp)!)&=l+v(l!). \ {\it Moreover,} \  v((lp^t)!)=\frac{l(p^t-1)}{p-1}+v(l!).\label{eqn 1}\\
  v(a!)=v((\lfloor a/p \rfloor p)!)&=(a-s(a))/(p-1),\label{eqn 3}
\end{align}
where $s(a):=\sum_{i=0}^ta_i$ is the base $p$ digit sum if
$a=\sum_{i=0}^ta_ip^i$ with digits $0\le a_i\le p-1$, and
\begin{align}\label{eqn 2}
v(a!)\leq (a-1)/(p-1)  {\it~ if ~} a>0.
\end{align}
\end{lem}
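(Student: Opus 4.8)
The plan is to derive all four assertions from the classical Legendre--de Polignac formula $v(a!)=\sum_{i\ge 1}\lfloor a/p^i\rfloor$ together with its equivalent closed form $v(a!)=(a-s(a))/(p-1)$, the latter obtained by expanding each $\lfloor a/p^i\rfloor$ in the base-$p$ digits of $a$ and summing the resulting geometric series. Once these are in hand, everything reduces to bookkeeping with floor functions and digit sums, so I do not anticipate a genuine obstacle; the only step calling for a little thought is (\ref{eqn 0}).

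First I would treat (\ref{eqn 3}) and (\ref{eqn 2}), the most basic items. The right-hand equality in (\ref{eqn 3}) is simply the closed form of Legendre's formula. For the middle equality, write $a=qp+r$ with $q=\lfloor a/p\rfloor$ and $0\le r\le p-1$; appending the single digit $r$ to the base-$p$ expansion of $q$ produces that of $a$ with no carry, so $s(a)=s(q)=s(qp)$ and hence $a-s(a)=qp-s(qp)$, which gives $v(a!)=v((qp)!)$. Then (\ref{eqn 2}) is immediate: for $a>0$ one has $s(a)\ge 1$, so $v(a!)=(a-s(a))/(p-1)\le (a-1)/(p-1)$.

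Next I would establish (\ref{eqn 1}) by splitting Legendre's sum. Peeling off the $i=1$ term for $(lp)!$ gives
\begin{align*}
v((lp)!)=l+\sum_{i\ge 2}\left\lfloor\frac{lp}{p^i}\right\rfloor=l+\sum_{j\ge 1}\left\lfloor\frac{l}{p^j}\right\rfloor=l+v(l!),
\end{align*}
and the general formula follows either by iterating this identity $t$ times or by splitting the sum for $(lp^t)!$ at $i=t$: the terms with $i\le t$ contribute $l(p^{t-1}+\cdots+1)=l(p^t-1)/(p-1)$ and those with $i>t$ contribute $v(l!)$.

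Finally, for (\ref{eqn 0}) I would use that $\dfrac{(ab)!}{(a!)^{\,b}\,b!}$, the number of ways to partition an $ab$-element set into $b$ unlabeled blocks of size $a$, is a nonnegative integer; applying $v$ and using $b\ge 1$ and $v(a!)\ge 0$ yields $v((ab)!)\ge b\,v(a!)+v(b!)\ge v(a!)+v(b!)$. An alternative, purely arithmetic route is to note that iterated subadditivity of the digit sum gives $s(ab)\le\sum_j b_j\,s(ap^j)=s(a)s(b)$ after writing $b=\sum_j b_jp^j$ and using $s(ap^j)=s(a)$; then, via the closed form, (\ref{eqn 0}) reduces to the inequality $(a-1)(b-1)\ge (s(a)-1)(s(b)-1)$, which holds because $a-1\ge s(a)-1\ge 0$ and $b-1\ge s(b)-1\ge 0$. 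Either way, the point is to avoid a term-by-term comparison of the Legendre sums, which does not work directly.
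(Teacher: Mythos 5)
The paper never proves this lemma: it is introduced with the sentence ``We will freely use the standard results listed in the following Lemma,'' and no argument is supplied. So there is no proof of record to compare against; your write-up supplies one, and it is essentially correct and complete. The derivation of (\ref{eqn 1}), (\ref{eqn 3}) and (\ref{eqn 2}) from Legendre's formula is the standard one, and both of your routes to (\ref{eqn 0}) are sound: the integrality of $(ab)!/\bigl((a!)^{b}\,b!\bigr)$ gives $v((ab)!)\ge b\,v(a!)+v(b!)\ge v(a!)+v(b!)$ at once, and the digit-sum route via $s(ab)\le s(a)s(b)$ and $(a-1)(b-1)\ge (s(a)-1)(s(b)-1)$ also checks out. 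You are right that a naive term-by-term comparison of the two Legendre sums does not give (\ref{eqn 0}) directly, so isolating that point was worthwhile. One slip to fix: in your proof of the middle equality of (\ref{eqn 3}), appending the units digit $r$ to the base-$p$ expansion of $q=\lfloor a/p\rfloor$ gives $s(a)=s(q)+r$, not $s(a)=s(q)$; the equality you assert is false whenever $r>0$, and taken literally it would give $a-s(a)=qp+r-s(q)\ne qp-s(qp)$. The intended conclusion is recovered from the correct identity: $a-s(a)=(qp+r)-(s(q)+r)=qp-s(q)=qp-s(qp)$. This is a local misstatement rather than a gap in the idea, but it should be corrected.
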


\begin{lem}
\cite{[A4]} \label{A3 lemma 2.1} If $p$ is an odd prime and
$N=v(l)$, then
  $$(lp)!/(l!p^l)\equiv (-1)^l \pmod {p^{N+1}}.$$
\end{lem}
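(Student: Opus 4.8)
The plan is to reduce the statement to a Wilson-type congruence for the product of the units modulo a prime power. First I would strip the multiples of $p$ out of $(lp)!$: among the integers $1,2,\dots,lp$ the multiples of $p$ are exactly $p,2p,\dots,lp$, whose product is $p^{l}\,l!$, so
\[
\frac{(lp)!}{l!\,p^{l}}=\prod_{\substack{1\le k\le lp\\ p\nmid k}}k .
\]
Thus it suffices to prove that this product of units is $\equiv(-1)^{l}\pmod{p^{N+1}}$.

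Next, write $l=p^{N}l'$ with $p\nmid l'$, so that $lp=p^{N+1}l'$, and partition the interval $[1,lp]$ into the $l'$ consecutive blocks $[(m-1)p^{N+1}+1,\,mp^{N+1}]$, $1\le m\le l'$. Within any such block the integers not divisible by $p$ run through a complete set of representatives of the units of $\mathbf{Z}/p^{N+1}\mathbf{Z}$, so the product of the entries of that block that are not divisible by $p$ is congruent modulo $p^{N+1}$ to the product of all units of $\mathbf{Z}/p^{N+1}\mathbf{Z}$.

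The key input is then the classical generalization of Wilson's theorem due to Gauss: since $p$ is odd, the unit group of $\mathbf{Z}/p^{N+1}\mathbf{Z}$ is cyclic of even order and its only element of order $2$ is $-1$, so pairing each unit with its inverse gives that the product of all units is $\equiv-1\pmod{p^{N+1}}$. Multiplying over the $l'$ blocks yields $\prod_{p\nmid k,\,k\le lp}k\equiv(-1)^{l'}\pmod{p^{N+1}}$. Finally, since $p$ is odd, $p^{N}$ is odd, hence $l\equiv l'\pmod 2$ and $(-1)^{l'}=(-1)^{l}$, which finishes the proof.

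I expect the only delicate points to be the bookkeeping in the block decomposition — checking that ``not divisible by $p$'' inside a block of length $p^{N+1}$ picks out precisely the $\phi(p^{N+1})$ units, each exactly once — and citing the correct form of the generalized Wilson theorem for odd prime powers; both are routine. A more self-contained alternative would avoid Gauss's theorem by expanding each block product $\prod_{i=1}^{p-1}\big((j-1)p+i\big)$ as a polynomial in $(j-1)p$ and controlling its lower coefficients with Wolstenholme-type congruences, but the unit-group argument is shorter and cleaner.
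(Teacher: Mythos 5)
Your proof is correct. Note that the paper itself gives no argument for this lemma at all: it is quoted verbatim from Adelberg's paper \cite{[A4]} and used as a black box, so there is no internal proof to compare against. Your argument is a clean, self-contained replacement: the identity $(lp)!/(l!\,p^{l})=\prod_{p\nmid k,\,k\le lp}k$ is exactly right, the decomposition of $[1,lp]$ with $l=p^{N}l'$ into $l'$ blocks of length $p^{N+1}$ correctly identifies each block's non-multiples of $p$ with the full unit group of $\mathbf{Z}/p^{N+1}\mathbf{Z}$, Gauss's generalization of Wilson's theorem gives $-1$ per block since $p$ is odd (the unit group is cyclic of even order $p^{N}(p-1)$ with $-1$ its unique involution), and the final parity observation $l\equiv l'\pmod 2$ (because $p^{N}$ is odd) closes the gap between $(-1)^{l'}$ and $(-1)^{l}$. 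This is essentially the standard proof one would find in \cite{[A4]} or reconstruct from it; it also makes transparent why the statement fails for $p=2$ and $N\ge 2$, where the product of units is $+1$ rather than $-1$ — which is precisely why the paper must treat the $p=2$ case separately in Section 4. The one point worth writing out explicitly if this were to be included is the sentence you flag yourself: that a block of $p^{N+1}$ consecutive integers hits each residue class modulo $p^{N+1}$ exactly once, so its non-multiples of $p$ are precisely the $\phi(p^{N+1})$ units, each once. That is indeed routine.
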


\begin{lem}
\cite{[A4]} \label{A3 lemma 2.2} $v((\sum h_jp^j)!)\geq
\sum(jh_j+v(h_j!)).$
\end{lem}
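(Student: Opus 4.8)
The plan is to split the factorial according to the prescribed $p$-power blocks $h_jp^j$ and then apply the estimates already recorded in Lemma~\ref{A3 lemma 2.0}, together with one elementary geometric-sum inequality.

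First I would use the superadditivity of the map $m\mapsto v(m!)$, namely $v((x+y)!)\geq v(x!)+v(y!)$ for nonnegative integers $x,y$. This holds because $v((x+y)!)-v(x!)-v(y!)=v\binom{x+y}{x}\geq 0$, the binomial coefficient being a positive integer; alternatively it is immediate from \eqref{eqn 3}, since the left-hand side equals $\frac{s(x)+s(y)-s(x+y)}{p-1}$, which is nonnegative because forming $x+y$ in base $p$ can only introduce carries, never increase the digit sum. Writing $a=\sum_j h_jp^j$ as the sum of its blocks and iterating this inequality, one gets $v\bigl((\sum_j h_jp^j)!\bigr)\geq\sum_j v\bigl((h_jp^j)!\bigr)$.

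Next, for each $j$ I would invoke the ``moreover'' part of \eqref{eqn 1} with $t=j$ and $l=h_j$, which gives $v((h_jp^j)!)=\frac{h_j(p^j-1)}{p-1}+v(h_j!)$. Since $p\geq 2$, we have $\frac{p^j-1}{p-1}=1+p+\cdots+p^{j-1}\geq j$, and hence $v((h_jp^j)!)\geq jh_j+v(h_j!)$; summing over $j$ then yields the claim. I do not expect a genuine obstacle here: the only step asking for a little care is the superadditivity (equivalently, Kummer's remark that carries only lower digit sums), which one may either cite as standard or derive on the spot from \eqref{eqn 3}. A route avoiding superadditivity altogether is to apply \eqref{eqn 3} directly to $a$, use the subadditivity of the base-$p$ digit sum together with $s(h_jp^j)=s(h_j)$ to get $s(a)\leq\sum_j s(h_j)$, and then rewrite $\frac{a-\sum_j s(h_j)}{p-1}=\sum_j\bigl(h_j\frac{p^j-1}{p-1}+v(h_j!)\bigr)$ before applying the same bound $\frac{p^j-1}{p-1}\geq j$.
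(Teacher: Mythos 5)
Your argument is correct. Note first that the paper itself gives no proof of this lemma: it is quoted from Adelberg \cite{[A4]} as a known fact, so there is nothing internal to compare against. Judged on its own, your derivation is sound and self-contained. The one point worth flagging is that the superadditivity you need, $v((x+y)!)\geq v(x!)+v(y!)$, is \emph{not} item (\ref{eqn 0}) of Lemma \ref{A3 lemma 2.0} (that inequality concerns the product $ab$, not the sum), so you are right to supply a justification rather than cite it; both of your justifications (integrality of $\binom{x+y}{x}$, or the digit-sum identity (\ref{eqn 3}) plus the fact that carries only decrease $s(\cdot)$) are valid. The remaining steps --- the exact evaluation $v((h_jp^j)!)=\frac{h_j(p^j-1)}{p-1}+v(h_j!)$ from (\ref{eqn 1}) and the bound $\frac{p^j-1}{p-1}=1+p+\cdots+p^{j-1}\geq j$ --- are correct, as is your alternative route through (\ref{eqn 3}) and the subadditivity of the base-$p$ digit sum. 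Either version would serve as a complete proof of the cited lemma.
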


\begin{lem}\cite{[A4]}\label{A3 lemma 2.3} If $p$ is an
odd prime and $0<k\leq p$ then
$$v(a!)\geq v(a+k)$$ unless $a=p-k$, in which case
$v(a!)=v(a+k)-1$.
\end{lem}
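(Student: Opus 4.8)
The plan is to reduce the statement to Legendre's formula for $v(a!)$, equivalently to the identities $v(a!)=(a-s(a))/(p-1)$ and $v(a!)=\lfloor a/p\rfloor+v(\lfloor a/p\rfloor!)$ recorded in Lemma~\ref{A3 lemma 2.0}. From these I will use only two consequences: (i) $v(a!)=0$ if $0\le a<p$ while $v(a!)\ge 1$ if $a\ge p$; and (ii) $v(a!)\ge\lfloor a/p\rfloor$ for every $a$ (immediate from $v(a!)=\lfloor a/p\rfloor+v(\lfloor a/p\rfloor!)$). Fix $k$ with $0<k\le p$. The argument splits according to whether $a<p$ or $a\ge p$.

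First suppose $0\le a<p$, so $v(a!)=0$ by (i). Since $0<a+k<2p$, the integer $a+k$ is divisible by $p$ exactly when $a+k=p$, i.e. $a=p-k$, and such an $a$ does lie in $[0,p)$. If $a\ne p-k$ then $p\nmid a+k$, hence $v(a+k)=0=v(a!)$; if $a=p-k$ then $a+k=p$, so $v(a+k)=1$ and $v(a!)=0=v(a+k)-1$. This delivers both the generic inequality and the exceptional equality in this range.

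Now suppose $a\ge p$. Then $a\ne p-k$, since $p-k<p\le a$, so it suffices to prove $v(a!)\ge v(a+k)$. Put $m=v(a+k)$. If $m\le 1$, then $v(a!)\ge 1\ge m$ by (i). If $m\ge 2$, then $a+k$ is a positive multiple of $p^m$, so $a+k\ge p^m$ and hence $a\ge p^m-k\ge p^m-p$, giving $\lfloor a/p\rfloor\ge p^{m-1}-1$; a one-line induction on $m$ using $p\ge 3$ shows $p^{m-1}-1\ge m$ for all $m\ge 2$, whence $v(a!)\ge\lfloor a/p\rfloor\ge m=v(a+k)$ by (ii). Combining the two cases proves the lemma.

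The step I expect to be the crux is the estimate $p^{m-1}-1\ge m$ for $m\ge 2$: it is false at $m=1$, which is precisely why the value $m=1$ must be absorbed into the bound $v(a!)\ge 1$ coming from $a\ge p$, and at $m=2$ it reduces to $p\ge 3$, which is exactly the hypothesis that $p$ is odd. Everything else is routine bookkeeping with Legendre's formula, and neither Lemma~\ref{A3 lemma 2.1} nor Lemma~\ref{A3 lemma 2.2} is needed.
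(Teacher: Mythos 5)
Your argument is correct. Note that the paper itself gives no proof of this lemma --- it is quoted from Adelberg's paper [A4] --- so there is nothing internal to compare against; your write-up supplies a complete elementary proof from Legendre's formula. Both halves check out: for $a<p$ the only multiple of $p$ in the interval $(0,2p)$ containing $a+k$ is $p$ itself, which isolates the exceptional case $a=p-k$ exactly as the lemma states; for $a\ge p$ your bound $v(a!)\ge\lfloor a/p\rfloor\ge p^{m-1}-1\ge m$ for $m\ge 2$ is valid, and you correctly flag that the inequality $p-1\ge 2$ at $m=2$ is precisely where the hypothesis that $p$ is odd enters (indeed the lemma fails for $p=2$, e.g.\ $a=k=2$).
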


Clarke \cite{[C]} proved some congruences about factorials. Clarke
and Jones \cite{[CJ]} obtained some stronger congruences about
factorials while Adelberg, Hong and Ren \cite{[AHR]} strengthened
Clarke's congruence for the $p=2$ case. But for our purpose, we need
the following result due to Adelberg \cite{[A4]}.

\begin{lem} \cite{[A4]} \label{A3 lemma 2.4}Let
$v(l)=N.$ Then

{\rm (i)} $((l+q)p)!/((l+q)!p^{l+q})\equiv
(-1)^l(qp)!/(q!p^q)\pmod{p^{N+1}}$.

{\rm (ii)} $((l+q)p+a)!/((l+q)!p^{l+q})\equiv
(-1)^l(qp+a)!/(q!p^q)\pmod{p^{N+1}}$.

{\rm (iii)} If $a\geq ep$, then the congruence {\rm (ii)} holds $\pmod
{p^{N+e}}$.

{\rm (iv)} If $a\geq (e+1)p$, then
$$
((l+q)p+a)!/((l+q)!p^{l+q+e})\equiv
(-1)^l(qp+a)!/(q!p^{q+e})\pmod{p^{N+1}}.
$$
\end{lem}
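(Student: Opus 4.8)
The plan is to prove the four congruences of Lemma~\ref{A3 lemma 2.4} by reducing each successively to the preceding one, starting from the base case already supplied by Lemma~\ref{A3 lemma 2.1}. The common thread is that the left-hand side of each congruence, as a function of the nonnegative integer being added, changes in a controlled multiplicative way when that integer is incremented, and the point is to show this change matches the corresponding change on the right-hand side modulo $p^{N+1}$, where $N=v(l)$.

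First I would prove (i) by induction on $q$. The case $q=0$ is exactly Lemma~\ref{A3 lemma 2.1} (with the empty product on the right interpreted as $1$). For the inductive step, observe that
\begin{align*}
\frac{((l+q+1)p)!}{(l+q+1)!\,p^{l+q+1}}=\frac{((l+q)p)!}{(l+q)!\,p^{l+q}}\cdot\frac{\prod_{j=1}^{p}((l+q)p+j)}{(l+q+1)p}.
\end{align*}
In the product $\prod_{j=1}^{p}((l+q)p+j)$ exactly one factor, namely $(l+q+1)p$, is divisible by $p$; after cancelling it against the denominator $(l+q+1)p$ and reducing the remaining $p-1$ factors modulo $p$, one gets $(p-1)!\equiv -1\pmod p$, hence
\begin{align*}
\frac{\prod_{j=1}^{p}((l+q)p+j)}{(l+q+1)p}\equiv\frac{(qp+p)!\,(q!\,p^{q})}{(qp)!\,((q+1)!\,p^{q+1})}\pmod{p}
\end{align*}
after the analogous manipulation on the right; since the quantity $((l+q)p)!/((l+q)!p^{l+q})$ is a $p$-adic unit times something, the congruence modulo $p$ for the ratio suffices to upgrade the inductive hypothesis modulo $p^{N+1}$ to the next modulo $p^{N+1}$. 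Here one must be careful: the correct bookkeeping is that the ratio of the two sides is $\equiv 1\pmod{p^{N+1}}$ and the incremental factor is literally equal on both sides up to a unit congruent to $1\pmod p$; a cleaner route is to note that $((l+q)p)!/((l+q)!p^{l+q})$ and $(qp)!/(q!p^q)$ are both $p$-adic units, so the assertion is that their quotient is $\equiv 1 \pmod{p^{N+1}}$, and this quotient is $\prod$ over the "extra" blocks of units, each block contributing the same value to numerator and denominator modulo a high power of $p$ by the same telescoping identity used in the proof of Lemma~\ref{A3 lemma 2.1}.

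For (ii), one multiplies the congruence in (i) by the factor $\prod_{j=1}^{a}((l+q)p+j)$ on the left and by $\prod_{j=1}^{a}(qp+j)$ on the right (for $a<p$, so no new factor of $p$ appears), noting that $(l+q)p+j\equiv qp+j\pmod{p^{N+1}}$ is false in general but $(l+q)p+j\equiv j\pmod p$ and more to the point the two products differ by $lp\cdot(\text{stuff})$ which has valuation $\ge N+1$ relative to the unit normalization; so the products agree modulo $p^{N+1}$ and (ii) follows. Statement (iii) is just the observation that the argument for (ii) never used $a<p$ except to guarantee no extra $p$ in $\prod_{j=1}^a((l+q)p+j)$; if instead $a\ge ep$ one absorbs the $e$ extra factors of $p$ by the same cancellation as in the step for (i), and since those come in matched blocks on both sides, the congruence survives. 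Finally (iv) is the case $a\ge(e+1)p$ of this: one has at least $e$ full blocks of length $p$ inside $\prod_{j=1}^{a}$, each contributing one factor $p$ which is precisely what the extra $p^{e}$ in the denominators on both sides is there to cancel, and the leftover units again telescope identically modulo $p^{N+1}$ by Lemma~\ref{A3 lemma 2.1}'s mechanism.

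The main obstacle is purely organizational: keeping straight, at each increment, exactly which factor carries the $p$, matching it with the correct power of $p$ in the denominator, and verifying that the \emph{unit} parts on the two sides are congruent modulo $p^{N+1}$ rather than merely modulo $p$. The leverage for the modulo-$p^{N+1}$ strength comes entirely from Lemma~\ref{A3 lemma 2.1}; everything added on top (the extra factors indexed by $q$, by $a$, and the $e$ absorbed powers of $p$) is literally identical on both sides after the $p$'s are stripped, because $((l+q)p+j)$ and $(qp+j)$ differ by $lp$, which has valuation $v(l)+1=N+1$. So the whole proof is a disciplined induction whose inductive step is a single telescoping identity, and the care needed is in the valuation bookkeeping, not in any deep new idea.
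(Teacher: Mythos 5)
The paper does not actually prove this lemma: it is quoted from Adelberg's paper [A4] and used as a black box, so there is no internal proof to measure your argument against. That said, your strategy --- induct on $q$ with base case Lemma \ref{A3 lemma 2.1}, and at each increment match the block of $p-1$ unit factors $\prod_{j=1}^{p-1}\bigl((l+q)p+j\bigr)$ on the left with $\prod_{j=1}^{p-1}(qp+j)$ on the right, using that corresponding factors differ by $lp$ and $v(lp)=N+1$ --- is the natural one and is in substance the argument of [A4]. The same observation disposes of (ii): the appended factors $(l+q)p+j$ and $qp+j$ again differ by $lp$, so the two products of $p$-adic integers agree modulo $p^{N+1}$, and multiplying a congruence between $p$-adic units by congruent $p$-adic integers preserves the modulus.

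Two cautions on your write-up. First, your initial version of the inductive step, which compares the incremental ratios only modulo $p$ via $(p-1)!\equiv-1\pmod p$, would not preserve a congruence modulo $p^{N+1}$; you flag this yourself, and it is only the corrected bookkeeping (corresponding factors congruent modulo $p^{N+1}$ because their difference is exactly $lp$) that carries the proof --- note also that, contrary to your aside, $(l+q)p+j\equiv qp+j\pmod{p^{N+1}}$ is simply true. Second, you should make explicit why (iv) requires $a\ge(e+1)p$ rather than $a\ge ep$: writing $\prod_j x_j-\prod_j y_j=\sum_j(x_j-y_j)\prod_{i<j}x_i\prod_{i>j}y_i$ with $x_j=(l+q)p+j$ and $y_j=qp+j$, each summand omits one factor, so to guarantee every summand still retains $e$ factors of $p$ (hence has valuation at least $N+1+e$, which is what survives division by $p^{e}$) one needs $e+1$ multiples of $p$ among the appended factors, i.e. $a\ge(e+1)p$. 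The same computation with only $e$ such multiples yields $p^{N+e}$, which is the real content of (iii); the modulus $p^{N+1}$ printed there appears to be a typo for $p^{N+e}$, as the $p=2$ analogue, Lemma \ref{lemma 4}(iii), confirms.
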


\section{Universal Kummer congruences modulo odd prime powers}

In the present section, we treat the odd prime case. We set up the
universal Kummer congruences modulo odd prime powers. We begin with
the following concept. Note that it is different from Definition 4.1
in \cite{[A4]}.

\begin{defn} A partition $u$ is called {\it reduced} if there is at
most one part $g\in \mathbf{N}$ such that $g\neq p^\alpha-1$, $u_g=1$
and if $i\neq g$ and $u_i\neq 0$, then $i=p^\alpha-1$.
\end{defn}

 \begin{lem}\label{lem -1}
Assume  $n=(m+i)p-i=i(p-1)+mp$ where $i\geq 0$. Let $w(u)=n$ and
suppose that $d(u)\leq i+1$. Then there exists a reduced partition
$u'$ such that $w(u')=n$, $d(u')\leq i+1$ and $v(\tau_u)\geq
v(\tau_{u'})$.
 \end{lem}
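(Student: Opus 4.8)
The plan is to prove the lemma by an explicit reduction procedure on partitions, taking $u$ with $w(u)=n$ and $d(u)\le i+1$ and repeatedly replacing ``bad'' parts (those not of the form $p^\alpha-1$) until at most one such part survives, while controlling the effect on $v(\tau_u)$. First I would recall from (\ref{2}) that $v(\tau_u)=v((n+d-2)!)-v(\gamma_u)$, where $v(\gamma_u)=\sum_i u_i\,v((i+1)!)+\sum_i v(u_i!)$. Since the numerator $v((n+d-2)!)$ depends only on $n$ and $d=d(u)$, and both $n$ and $d$ are to be preserved (or $d$ only decreased, in which case I would need the crude bound from Lemma \ref{A3 lemma 2.3} or \ref{eqn 2} to absorb the change), the heart of the matter is to show that a single reduction step does not decrease $v(\gamma_u)$ by more than the numerator can afford. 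The key arithmetic input is the identity $v((i+1)!)=\frac{(i+1)-s(i+1)}{p-1}$ from (\ref{eqn 3}): a part $i$ with $i=p^\alpha-1$ has $i+1=p^\alpha$, contributing the maximal possible valuation $\frac{p^\alpha-1}{p-1}$ per unit of weight, so ``replacing'' weight concentrated in bad parts by weight sitting in parts $p^\alpha-1$ can only \emph{increase} $v(\gamma_u)$, hence decrease $v(\tau_u)$ in the right direction — wait, I must be careful about the direction, since we want $v(\tau_u)\ge v(\tau_{u'})$, i.e. $v(\gamma_{u'})\ge v(\gamma_u)$ up to the numerator correction.

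So the concrete step I would carry out: given two bad parts (or one bad part with multiplicity, or a bad part together with a surplus that can be split off), I would merge or re-split them so as to move all the ``excess'' weight into parts of the form $p^\alpha-1$. Concretely, if $u$ has parts $g,h$ both $\ne p^\alpha-1$, write their combined weight $W=\omega$-contribution and combined degree $2$; I would replace them by one part $g'$ (the ``residual'' bad part, chosen so that $g'\equiv$ the appropriate residue forced by $n \bmod (p-1)$ and the constraint $d(u')\le i+1$) together with one or more parts equal to $p^\beta-1$ absorbing the rest, arranging the total degree not to exceed the old degree. I would then compare $v(\gamma)$ before and after using the superadditivity/maximality of $v((i+1)!)$ noted above, together with the elementary inequalities for $v(u_i!)$ (here the digit-sum formula and the fact that small multiplicities contribute little). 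The decrease in degree, if any, only helps in the numerator via $v((n+d-2)!)$ being monotone in $d$ for fixed $n$ in the regime $n+d-2$ large, and any loss is at most $O(1)$ in $v$, covered by Lemma \ref{A3 lemma 2.3}.

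The main obstacle I expect is bookkeeping the \emph{degree} constraint $d(u')\le i+1$ simultaneously with preserving $w(u')=n$: when I split a large bad part into copies of $p^\beta-1$ I risk increasing the number of parts, and the bound $d\le i+1$ is exactly what is needed for later applications, so it cannot be relaxed. The relation $n=i(p-1)+mp$ is the structural key here: it says $n$ decomposes as $i$ ``blocks'' of size $p-1$ (plus the $mp$ term), which is precisely the budget that lets $i+1$ parts of sizes $p^\alpha-1$ (each a union of such blocks) fill out the weight $n$ while leaving at most one genuinely bad part. So the reduction algorithm must be organized around this block structure: peel off maximal runs that can be consolidated into a single $p^\alpha-1$, and show the residual is a single bad part of controlled size. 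A secondary technical point is handling the case $g=p^\alpha-1$ already but with $u_g\ge 2$: here $v(u_g!)$ contributes, and combining two copies of $p^\alpha-1$ into pieces of the form $p^\beta-1$ (when $2(p^\alpha-1)=p^\beta-1+\cdots$ is not automatic) requires the same careful comparison of $v((i+1)!)$-terms; I would treat this via the inequality $2\,v((p^\alpha)!)\le v((p^\beta)!)+(\text{smaller terms})$ when $2p^\alpha\le p^\beta$, which follows again from (\ref{eqn 3}) and (\ref{eqn 0}).

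Once a single reduction step is shown to be valuation-nonincreasing on $\tau_u$ (equivalently nondecreasing on $v(\gamma)$ modulo a bounded numerator correction), the lemma follows by induction on the number of bad parts (weighted by their sizes), the base case being a partition that is already reduced, for which there is nothing to prove.
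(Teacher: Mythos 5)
Your overall strategy --- rewrite $u$ by pushing weight into parts of the form $p^\alpha-1$, keep one residual ``bad'' part, and compare $v(\tau_u)$ with $v(\tau_{u'})$ through $v(\tau_u)=v((n+d-2)!)-v(\gamma_u)$ --- is the same as the paper's. But the proposal has a genuine gap exactly at the point the paper has to work hardest: the case where the degree must \emph{increase}. In the paper's construction one first forms $u''$ (transferring a part $\varepsilon p^\alpha-1$, $p\nmid\varepsilon$, $\varepsilon>1$, to the part $p^\alpha-1$; converting a part $t$ with $u_t\ge p$, $p\nmid(t+1)$, into only $\lceil u_t/(p-1)\rceil-1$ copies of $p-1$; deleting bad parts of small multiplicity), and then restores the weight by adjoining a \emph{single} residual part $g=n-w(u'')$ of multiplicity $1$. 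That last step raises the degree by one, so two things must be verified with no slack: (a) $d(u')\le i+1$ still holds --- the paper gets this from the congruence $i\equiv\sum u_{\varepsilon p^\alpha-1}=d(u)\pmod p$ forced by $n=(m+i)p-i$ in the only case where the degree did not already drop, which combined with $1\le d(u)\le i+1$ gives $d(u)\le i$; and (b) the numerator does not grow, i.e.\ $v((n+d-1)!)=v((n+d-2)!)$, which follows from \eqref{eqn 3} because $n+d-2\equiv-2\pmod p$ there. Your proposal instead says any loss ``is at most $O(1)$ in $v$, covered by Lemma \ref{A3 lemma 2.3}'' --- but the conclusion $v(\tau_u)\ge v(\tau_{u'})$ is an exact inequality with zero tolerance, and this lemma feeds into Lemma \ref{lem -2} and Corollary \ref{cor 1} where every unit of valuation matters. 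An $O(1)$ fudge is not a proof here, and Lemma \ref{A3 lemma 2.3} does not patch it.

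Two further points. First, your reduction step of splitting a bad part into ``one or more parts equal to $p^\beta-1$'' risks increasing the number of parts, which is precisely what the constraint $d(u')\le i+1$ forbids; the paper's operation (ii) is calibrated so that a part of multiplicity $u_t\ge p$ becomes at most $\lceil u_t/(p-1)\rceil-1<u_t$ copies of $p-1$, so the degree never goes up at this stage, while \eqref{eqn 1} and \eqref{eqn 2} guarantee $v(\gamma_{u''})\ge v(\gamma_u)$. You would need to exhibit an equally explicit, degree-nonincreasing and $v(\gamma)$-nondecreasing operation; ``merge two bad parts and re-split'' is not specified enough to check either property. Second, your ``secondary technical point'' about a part $g=p^\alpha-1$ with $u_g\ge2$ rests on a misreading of the definition of \emph{reduced}: parts of the form $p^\alpha-1$ may occur with arbitrary multiplicity; only the single exceptional part $g\ne p^\alpha-1$ is required to have $u_g=1$. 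No combining of copies of $p^\alpha-1$ is needed.
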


{\it Proof.} We first define a partition $u''$: If $t\ne
p^\alpha-1$, then $u_t'':=0$; if $t=p^\alpha-1$, then
\begin{align}\label{0000}
u''_t=u''_{p^\alpha-1}:=\sum_{\varepsilon\in \mathbf{Z}^+,~p\nmid
\varepsilon}u_{\varepsilon p^\alpha-1}+\delta(\alpha)\cdot
\sum_{u_j\ge p, ~p\nmid (j+1)}
\Big(\Big\lceil\frac{u_j}{p-1}\Big\rceil-1\Big),
 \end{align}
where $\delta(\alpha)=1$ for  $\alpha=1$ and $\delta(\alpha)=0$ for
$\alpha\ge 2$.  In fact, we construct $u''$ as follows:

(i) If $u_t\neq 0$ with $t=\varepsilon p^\alpha -1$ and $p
 \nmid  \varepsilon$ and $\varepsilon >1$, let $u''_t=0$ and
$u''_k=u_k+u_t$ where $k=p^\alpha -1$, i.e., transfer $u_t$ to the
part $p^\alpha -1$.

(ii) If $u_t\geq p$, where $p\nmid (t+1)$, let $u''_t=0$ and
$u''_{p-1}=u_{p-1}+\lceil u_t/(p-1)\rceil
  -1$, i.e., transfer to the part $p-1$.

(iii) If $0<u_t<p$ and $p\nmid (t+1)$, let $u''_t=0$, i.e., delete
the part $t$ from the partition. The partition $u''_t$ is formed by
considering all parts, and should be thought of as loading the parts
where $i=p^\alpha -1$. All other parts of $u$ where (i)-(iii) do not
apply are unchanged. The partition $u''_t$ can be constructed from
$u$ one part at a time or all at once.

Clearly $w(u'')\leq n$ and if $u_i\neq 0$ then $i=p^\alpha -1$.
Observe that $v(\gamma_u'')\geq v(\gamma_u)$ by (\ref{eqn 1}) and
(\ref{eqn 2}), and that $d(u'')=d(u)$ if all modifications are of
type (i), where otherwise $d(u'')<d(u)$.

Next let $g=n-w(u'')\geq 0$. If $g=0$, let $u'=u''$. Then $u'$ is
just what we need.

If $g>0$, let $u'_g=u''_g+1$ and $u'_j=u''_j$ if $j\neq g$. Then
$u'$ is reduced, $w(u')=n$, $d'=d(u')=d(u'')+1$ and
$v(\gamma_{u'})\geq v(\gamma_{u''})\geq v(\gamma_u)$. If
$d(u'')<d(u)$, then $d(u')\leq d(u)\leq i+1$. Hence $v(\tau_u)\geq
v(\tau_{u'})$. Finally assume $d(u'')=d(u)$. In this case all
modifications are of type (i), so $n=\sum iu_i=\sum (\varepsilon
p^\alpha-1)u_{\varepsilon p^\alpha-1}$ and $d=\sum u_{\varepsilon
p^\alpha-1}$. But $n=(m+i)p-i$, thus $i\equiv\sum u_{\varepsilon
p^\alpha-1}=d(u)\pmod p$. Since $1\leq d(u) \leq i+1$, it follows
that $d(u)\leq i$ and $d(u')=d(u)+1\leq i+1$. Also
$n+d-2=\sum\varepsilon p^\alpha u_{\varepsilon p^\alpha-1}-2$. Hence
$v((n+d-2)!)=v((n+d'-2)!)$  by (\ref{eqn 3}), which means
$v(\tau_u)\geq v(\tau_{u'})$. The proof is complete.\hfill$\Box$

\begin{lem}\label{lem -2} Let $n=(m+i)p-i$ with $m=s(p-1)$. Let u be reduced,
with $w=n$ and $d\leq i+1$. Then $v(\tau_u)\geq s(p-2)-1$.
\end{lem}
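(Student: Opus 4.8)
The plan is to reduce the problem, via Lemma~\ref{lem -1}, to the case where $u$ is reduced, and then exploit the very rigid structure of a reduced partition to compute $v(\tau_u)$ almost explicitly. Recall from (\ref{2}) that $v(\tau_u)=v((n+d-2)!)-v(\gamma_u)$, so I need a good \emph{lower} bound for $v((n+d-2)!)$ and a good \emph{upper} bound for $v(\gamma_u)$. Since $u$ is reduced, all parts except possibly one special part $g$ are of the form $p^\alpha-1$; write the multiplicities of the parts $p^\alpha-1$ as $u_{p^\alpha-1}=h_\alpha$, so that $\gamma_u=\prod_\alpha (p^\alpha)^{h_\alpha}h_\alpha!\cdot(g+1)^{u_g}u_g!$ with $u_g\in\{0,1\}$. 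Thus $v(\gamma_u)=\sum_\alpha\bigl(\alpha h_\alpha+v(h_\alpha!)\bigr)+v(g+1)\cdot u_g$, and by (\ref{eqn 2}) this is at most $\sum_\alpha\bigl(\alpha h_\alpha+\tfrac{h_\alpha-1}{p-1}\bigr)+v(g+1)u_g$ (the $h_\alpha-1$ term only when $h_\alpha>0$).

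Next I would analyze $n+d-2$ in base $p$. From $w(u)=n$ we get $\sum_\alpha (p^\alpha-1)h_\alpha + g\,u_g = n$, and $d=\sum_\alpha h_\alpha+u_g$. The key observation is that $(p^\alpha-1)h_\alpha$ has base-$p$ digit sum exactly $(p-1)$ times the number of "borrows," and more usefully, $n+d-2 = \sum_\alpha p^\alpha h_\alpha + g u_g + u_g - 2$ after cancelling the $-h_\alpha$ with part of $d$. So $n+d-2$ (or a quantity differing from it in a controlled way) is essentially $\sum_\alpha p^\alpha h_\alpha$ plus a small correction coming from $g$, and its digit sum $s(n+d-2)$ is controlled by $\sum_\alpha s(h_\alpha)+\,(\text{digits of }g)$, with some bounded slack from carries. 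Using (\ref{eqn 3}), $v((n+d-2)!)=\frac{(n+d-2)-s(n+d-2)}{p-1}$; combining this with the hypothesis $n=(m+i)p-i$ and $m=s(p-1)$, the quantity $n+d-2$ over $p-1$ will produce a term of size roughly $s\cdot\frac{p}{p-1}\cdot(\text{something})$ while the $s(n+d-2)/(p-1)$ and the $v(\gamma_u)$ bounds eat into it. The target bound $s(p-2)-1$ should then fall out after bookkeeping: the main contribution is $v((n+d-2)!)\ge$ roughly $s(p-2)$ and the losses from $v(\gamma_u)$ and the digit sums total at most~$1$.

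A cleaner way to organize the computation, which I would actually carry out, is to peel off the parts one factor $p^\alpha$ at a time using the identities in Lemma~\ref{A3 lemma 2.0}: iterating (\ref{eqn 1}) gives $v((lp^t)!)=l\frac{p^t-1}{p-1}+v(l!)$, which is exactly matched against the $(p^\alpha)^{h_\alpha}$ factors of $\gamma_u$, so that the "$p$-power" parts of $u$ contribute \emph{nonnegatively but boundedly} to $v(\tau_u)$. Then what remains is the interaction of the single irregular part $g$ (with $g+1$ not a power of $p$) together with the global constraint $n=i(p-1)+mp$; here Lemma~\ref{A3 lemma 2.3} is the natural tool to compare $v(a!)$ with $v(a+k)$ when a part of size $g$ is split off, $0<g+1\le p$ being the relevant range after reduction, and this is where the exceptional $-1$ in the bound $s(p-2)-1$ enters (precisely the exceptional case $a=p-k$ of Lemma~\ref{A3 lemma 2.3}).

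The main obstacle I anticipate is the bookkeeping of carries when passing between $n$, $n+d-2$, and $\sum_\alpha p^\alpha h_\alpha + g$: the digit-sum function $s$ is not additive, so I must bound $s(n+d-2)$ from above carefully — too crude a bound loses more than~$1$ and the theorem fails. I expect the resolution to be that a reduced partition is \emph{so} constrained (essentially one free parameter $g$ plus multiplicities of $p^\alpha-1$) that the carry pattern in $\sum(p^\alpha-1)h_\alpha$ is forced, and the worst case is exactly $g=p^\beta-1-1$ style configurations that trigger the single exceptional drop. The secondary obstacle is keeping track of the parameter $e$ / the precise relation $m=s(p-1)$, i.e. ensuring the algebra relating $i$, $m$, $s$, $p$ is used in the right place so the leading term is genuinely $s(p-2)$ and not something smaller; I would verify this by checking the extremal reduced partition (all mass on the part $p-1$, i.e. $h_1=s$ roughly, $g=0$) gives equality or near-equality, which both pins down the constant and serves as a sanity check on the whole argument.
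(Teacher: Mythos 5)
Your outline has the right general frame (bound $v((n+d-2)!)$ from below and $v(\gamma_u)$ from above, split off the single irregular part $g$, and invoke Lemma~\ref{A3 lemma 2.3} for the $g$-interaction), and your formula $v(\gamma_u)=\sum_\alpha(\alpha h_\alpha+v(h_\alpha!))+v(g+1)u_g$ is exactly the paper's starting point. But there is a genuine gap: you never explain where the factor $s$ in the bound $s(p-2)-1$ actually comes from, and your proposed sanity check reveals the problem. The configuration you call extremal --- all mass on the part $p-1$, $h_1\approx s$ --- is \emph{not admissible}: with $n=(p-1)(sp+i)$ it would force $d=h_1=sp+i$, violating the hypothesis $d\le i+1$. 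That hypothesis is the engine of the whole lemma. Combined with $n=(m+i)p-i$ it gives $i\equiv d\pmod p$, hence $d\le i$ and $i=d+kp$, and it forces the weight into high parts $p^j-1$ with $j\ge 2$. In the paper's tight case (all parts equal to $p-1$ or $p^2-1$, no part $g$) one deduces $h_2\ge s$ from $h_1+h_2+h_2p=sp+i$ and $d=h_1+h_2=i-kp$, and then $v(\tau_u)\ge h_2(p-2)-1\ge s(p-2)-1$; in all other subcases the weaker bound $v(\tau_u)\ge m-1=s(p-1)-1$ already suffices. None of this appears in your plan, so the leading term $s(p-2)$ is not actually produced by your argument.

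Two further points. First, your digit-sum route via (\ref{eqn 3}) is one you yourself flag as delicate (``too crude a bound loses more than $1$''), and you do not resolve it; the paper sidesteps digit sums entirely by telescoping $v((\cdot p)!)$ via (\ref{eqn 1}) and Lemma~\ref{A3 lemma 2.2}, which is the cleaner of your two suggested organizations and the one you should commit to. Second, your claim that after reduction $0<g+1\le p$ is false --- $g$ is only constrained to satisfy $g\ne p^\alpha-1$ and can be large (in the paper $g=k'p-\delta$). Lemma~\ref{A3 lemma 2.3} is applied with $k=2$ to compare $v((g-1)!)$ with $v(g+1)$, and the hardest subcase ($\delta-k'<0$, $p\mid g+1$) needs the extra observation that $g=p^2-1$ is excluded precisely because $g\ne p^\alpha-1$; that final twist, which is where reducedness is used in an essential way, is absent from your sketch.
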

\noindent{\it Proof.} Let $h_j=u_{p^j-1}$ with $j\geq 1$. Consider
the following cases:

{\sc Case 1.} If $u_t\neq 0$, then $t=p^\alpha -1$. In this case,
$d=\sum_{j\geq 1} h_j$ and $n=\sum_{j\geq 1} h_j(p^j-1)=(m+i)p-i$.
Then $i\equiv \sum_{j\geq 1} h_j=d$ (mod $p$). Since $d\leq i+1$, we
may let $i=d+kp$ with $k\geq 0$. Then
$$n+d-2=\sum_{j\geq 1} h_jp^j-2=(m+i)p-i+d-2=(m+i-k)p-2.$$
It implies that  $k=m+i-\sum_{j\geq 1} h_jp^{j-1}$.  It is easy to
see that $v(\gamma_u)=\sum_{j\geq 1}(jh_j+v(h_j!))$. Hence by Lemma
\ref{A3 lemma 2.2}
\begin{align}\label{eqn 4}
    v(\tau_u)&=v(((m+i-k)p-2)!)-\sum_{j\geq 1} (jh_j+v(h_j!))\\
    &=m+i-k-1+v((m+i-k-1)!)-\sum_{j\geq 1} (jh_j+v(h_j!))\nonumber\\
    &=m+i-k-1+v((\sum_{j\geq 1} h_jp^{j-1}-1)!)-\sum_{j\geq 1} ((j-1)h_j+v(h_j!))-d\nonumber\\
    &=m-1+k(p-1)+\beta\nonumber,
\end{align}
where $$\beta=v((\sum_{j\geq 1} h_jp^{j-1}-1)!)-\sum_{j\geq 1}
((j-1)h_j+v(h_j!)).$$

For $j\geq 3$ and $h_j\neq 0$, we have
\begin{align*}
v((h_jp^{j-1}-1)!)&=h_jp^{j-2}-1+v((h_jp^{j-2}-1)!)\\
&\ge h_jp^{j-2}-1+v((h_jp-1)!)\\ &\geq(j-1)h_j+v(h_j!)
\end{align*}
since $p^{j-2}-j+1\ge1$. So if there exists a $j\geq 3$ such that
$h_j\neq 0$, then we have
$$v((\sum_{j\geq1} h_jp^{j-1}-1)!) \geq v(h_1!)+v((h_2p)!)
 +\sum_{j\geq 3}v((h_jp^{j-1}-1)!)\geq \sum ((j-1)h_j+v(h_j!)).$$
That is, $\beta \geq 0$. Hence $v(\tau_u)\geq m-1\geq s(p-2)-1$ by
(\ref{eqn 4}).

If for all $j\geq 3$, $h_j=0$, since $m=s(p-1)$, we have
$$n=h_1(p-1)+h_2(p^2-1)=mp+i(p-1)=s(p-1)p+i(p-1).$$
This implies that $h_1+h_2+h_2p=sp+i$. Since $d=h_1+h_2=i-kp$,
$h_2p=sp+(i-h_1-h_2)=sp+kp$. Then $h_2=s+k\geq s.$ On the other
hand,
\begin{align*}
   v(\tau_u)&=v(((h_1p+h_2p^2)-2)!)-(h_1+2h_2+v(h_1!)+v(h_2!)\\
    &=h_1+h_2p-1+v((h_1+h_2p-1)!)-(h_1+2h_2+v(h_1!)+v(h_2!))\\
    &\geq h_2(p-2)-1+v((h_2p-1)!)-v(h_2!)\\
    &\geq s(p-2)-1
\end{align*}
as desired.

 {\sc Case 2.} If there is exactly one
part $g\in\mathbf{Z}^+$ such that $g\neq p^\alpha-1$, $u_g=1$, and
if $t\neq g$ and $u_t\neq 0$ then $t=p^\alpha-1$. In this case, we
have $d=\sum_{j\ge1} h_j+1$, $n=\sum_{j\ge1} h_j(p^j-1)+g=(m+i)p-i,$
and
$$n+d-2=\sum_{j\ge1} h_jp^j+g-1=(m+i)p-i+d-2.$$
Then $g=(m+i)p-i+d-1-\sum_{j\ge1} h_jp^j$.

Also we can check that
\begin{align}
\label{-001} v(\gamma_u)=\sum_{j\ge1}(jh_j+v(h_j!))+v(g+1).
\end{align}
Let $\delta=i+1-d=i-\sum_{j\ge1} h_j$. Then $\delta \geq 0$ since
$d\leq i+1$. We have
$$\delta+g=(i+1-d)+(m+i)p-i+d-1-\sum_{j\ge1} h_jp^j=k'p,$$
where $k'=m+i-\sum h_jp^{j-1}.$ Then
$\sum_{j\ge1}h_jp^{j-1}=m+i-k'$. Since $g>0$ and $\delta \geq 0$, we
have $k'>0$. Furthermore, if $\delta =0$, then $g=k'p$.

On the other hand, since $n=(m+i)p-i$,
\begin{align}
\label{-002}n+d-2=(m+i)p-(\delta+1)=(m+i-k')p+g-1.
\end{align}
Hence by (\ref{eqn 0}), (\ref{eqn 1}), (\ref{-001}), (\ref{-002})
and Lemma \ref{A3 lemma 2.2} we have
\begin{align*}
    v(\tau_u)&= v(((m+i-k')p+g-1)!)-(\sum_{j\ge1} (jh_j+v(h_j!))+v(g+1))\\
    &\geq v(((m+i-k')p)!)+v((g-1)!)-\sum_{j\ge1} ((j-1)h_j+v(h_j!))-\sum_{j\ge1} h_j-v(g+1)\\
    &\geq m+i-k'-\sum_{j\ge1} h_j+v((m+i-k')!)+v((g-1)!) -v((\sum_{j\ge1} h_jp^{j-1})!)-v(g+1)\\
    &\geq m+i-k'-\sum_{j\ge1} h_j+v((g-1)!)-v(g+1)\\
    &\geq m-1+\delta'\\
    &\geq s(p-2)-1+\delta',
\end{align*}
where $\delta'=\delta-k'+1+v((g-1)!)-v(g+1)$. Thus it is sufficient
to show that $\delta'\ge0$.

If $\delta-k'\geq 0$, then $\delta'\ge v((g-1)!)-v(g+1)+1\ge0$ by
Lemma \ref{A3 lemma 2.3}.

It remains to consider the case $\delta-k'< 0$. In this case, let
$\delta =xp+r$ with $0\leq r<p$. Then $x=\lfloor \delta /p\rfloor$.
Clearly $x<\delta$ unless $\delta =0=r=x$, and $x<\delta -1$ if
$\delta \geq2$ since $p\geq3$.

Since $0<g=k'p-\delta=(k'-x)p-r$, we have $k'-x\geq1$. But
$$g-1=(k'-x)p-(r+1)=(k'-x-1)p+(p-r-1).$$
So $$v((g-1)!)=v(((k'-x-1)p)!)=k'-x-1+v((k'-x-1)!).$$ Hence
\begin{align*}
  \delta'&=\delta-k'+1+k'-x-1+v((k'-x-1)!)-v(g+1)\\
         &=\delta-x+v((k'-x-1)!)-v(g+1).
\end{align*}

If $p\nmid g+1$, then $\Delta'=\delta-x\ge0.$ Let now $p|g+1$. Since
$g+1=(k'-x)p-(r-1)$, we have $r=1$ and $g+1=(k'-x)p$. So
$$\delta'=\delta-x+v((k'-x-1)!)-v(k'-x)-1.$$

Since $r\neq 0$, we have $\delta \neq0$. So $\delta-x\geq 1$.
By Lemma \ref{A3 lemma 2.3} it
remains to show that $\delta -x=1$ is
impossible. If $\delta -x=1$, then $\delta=1$ and $x=0$, it is
sufficient to show that $k'-x\neq p$. But if $k'=p$ then $\delta
+g=1+g=k'p=p^2$, so $g=p^2-1$, which is impossible since $g\ne
p^\alpha-1$. The proof of Lemma \ref{lem -2} is
complete.\hfill$\Box$

\begin{cor}\label{cor 1}
Let $n=(m+i)p-i$ with $m=s(p-1)$. Assume that $w=n$ and $d(u)\leq
i+1$. Then $v(\tau_u) \geq s(p-2)-1$.
\end{cor}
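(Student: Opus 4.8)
The plan is to deduce Corollary~\ref{cor 1} directly from Lemmas~\ref{lem -1} and~\ref{lem -2} by a reduction argument. First I would observe that the hypothesis of the corollary is weaker than that of Lemma~\ref{lem -2}: we are given an arbitrary partition $u$ with $w(u)=n=(m+i)p-i$ and $d(u)\le i+1$, but $u$ is not assumed to be reduced. So the first step is to pass to a reduced partition. Apply Lemma~\ref{lem -1} (whose hypotheses $w(u)=n$, $d(u)\le i+1$ are exactly what we have here, since $n=i(p-1)+mp$ with $i\ge 0$): it produces a reduced partition $u'$ with $w(u')=n$, $d(u')\le i+1$, and $v(\tau_u)\ge v(\tau_{u'})$.

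The second step is to apply Lemma~\ref{lem -2} to $u'$. Here I would check that the extra hypothesis of Lemma~\ref{lem -2}, namely $m=s(p-1)$, is available: it is part of the hypothesis of the corollary. Since $u'$ is reduced with $w(u')=n$ and $d(u')\le i+1$, Lemma~\ref{lem -2} gives $v(\tau_{u'})\ge s(p-2)-1$. Combining the two inequalities,
\[
v(\tau_u)\ge v(\tau_{u'})\ge s(p-2)-1,
\]
which is exactly the claimed bound. This completes the proof.

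I do not expect any genuine obstacle here: the corollary is essentially just the concatenation of the two preceding lemmas, with Lemma~\ref{lem -1} removing the ``reduced'' restriction and Lemma~\ref{lem -2} supplying the valuation estimate in the reduced case. The only point that requires a moment's care is verifying that the numerical hypotheses match up — that $n=(m+i)p-i$ can be written as $i(p-1)+mp$ with $i\ge 0$ so that Lemma~\ref{lem -1} applies, and that $m=s(p-1)$ is being assumed throughout so that Lemma~\ref{lem -2} applies — but both of these are immediate from the statement. There is nothing to grind through.
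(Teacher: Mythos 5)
Your proposal is correct and matches the paper's own proof, which simply states that the corollary follows immediately from Lemmas \ref{lem -1} and \ref{lem -2}; you have merely spelled out the concatenation (pass to a reduced partition via Lemma \ref{lem -1}, then apply the valuation bound of Lemma \ref{lem -2}). No further comment is needed.
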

\begin{proof}
This corollary follows immediately from Lemmas \ref{lem -1} and
\ref{lem -2}.
\end{proof}

Let's now recall the critical bounds for $\hat{{B}}_n/n$.

\begin{lem}\cite{[AHR]} \label{AHR thm 2.1}
Let $p$ be an odd prime. Suppose that $\omega(u)=n$ and $n=(p-1)s_0$
and $u_{p-1}<s_0$. Let $e=v(\gamma_u)-v((pu_{p-1})!)$ and
$\dot{n}=n-(p-1)u_{p-1}$. Then $n+d-2\geq ((u_{p-1}+e+1)p$ except
for the following cases where
$(u_{p-1}+e+1)p>n+d-2\geq(u_{p-1}+e)p$:

{\rm (i)} $p\geq 3,u_{p-1}=s_0-1$, and for $1\leq i\leq
\frac{p-3}{2}$, we have $u_i=u_{p-1-i}$, or if $i=\frac{p-1}{2}$, we
have $u_i=2$. In these cases, $\dot{n}=p-1$ and $e=0$;

{\rm (ii)} $p=3,u_2=s_0-4,u_8=1$. In this case, $\dot{n}=8$ and
$e=2$.
\end{lem}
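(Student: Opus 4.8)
The plan is to analyze the key quantity $n+d-2$ in terms of the ``expected'' lower bound $(u_{p-1}+e+1)p$ and track exactly when the bound fails. Writing $n=(p-1)s_0$ and $\dot n = n-(p-1)u_{p-1}$, we have $n+d-2 = (p-1)u_{p-1} + \dot n + d - 2$. Since $d = u_{p-1} + \dot d$ where $\dot d = \sum_{i\neq p-1} u_i$ counts the parts other than the $(p-1)$'s, this becomes $n+d-2 = pu_{p-1} + (\dot n + \dot d - 2)$. So the claim $n+d-2 \geq (u_{p-1}+e+1)p$ is equivalent to $\dot n + \dot d - 2 \geq (e+1)p$, and the exceptional regime $(u_{p-1}+e+1)p > n+d-2 \geq (u_{p-1}+e)p$ is equivalent to $(e+1)p > \dot n + \dot d - 2 \geq ep$. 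The first step is thus to reduce everything to the ``reduced'' partition $\dot u$ obtained from $u$ by deleting all the $(p-1)$-parts, which has weight $\dot n$ and degree $\dot d$, and to show $e = v(\gamma_u) - v((pu_{p-1})!)$ depends only on this reduced data.

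The second step is to bound $e$ from above using the structure of $\gamma_u = \prod_{i}(i+1)^{u_i} u_i!$. Since $v(\gamma_u) = v((pu_{p-1})!) + e$ and $\gamma_u = p^{u_{p-1}} u_{p-1}! \cdot \prod_{i\neq p-1}(i+1)^{u_i} u_i!$ while $v((pu_{p-1})!) = u_{p-1} + v(u_{p-1}!)$ by (\ref{eqn 1}), we get $e = \sum_{i\neq p-1}\big(u_i\, v(i+1) + v(u_i!)\big)$. Now I would invoke Lemma \ref{A3 lemma 2.3}: for each part $i \neq p-1$ with $u_i \neq 0$, either $v((i+1)) \le v(u_i!) $-type estimates control things, or one is in the sharp boundary case $i = p^\alpha - k$. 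The goal is to show that each ``unit of $e$'' is paid for by a corresponding large contribution to $\dot n + \dot d$, i.e., that $\dot n + \dot d - 2 \ge (e+1)p$ holds with room to spare unless the reduced partition is extremely small.

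The third and decisive step is the case analysis of the small reduced partitions where the inequality is tight. When $\dot n$ is small (of size roughly $p$), only a handful of reduced partitions exist: $\dot u$ could be a single part $g$, two parts, etc. I would enumerate: if $\dot n = p-1$ with $\dot u$ a sum of two parts $i, p-1-i$ (or a double part $\tfrac{p-1}{2}$) then $\dot d = 2$, $e = v(i+1) + v(p-i) = 0$ since neither $i+1$ nor $p-i$ is divisible by $p$ (as $0 < i < p$), giving $\dot n + \dot d - 2 = p-1 < p = (e+1)p$, which is case (i). For $p=3$ specifically, the small cases collapse further and one finds the anomaly $\dot u = (u_8 = 1)$ with $\dot n = 8$, $e = v(9) = 2$, $\dot d = 1$, so $\dot n + \dot d - 2 = 7$, and $ep = 6 \le 7 < 9 = (e+1)p$, which is case (ii). The main obstacle is the bookkeeping in this enumeration: one must verify that \emph{no other} small reduced partition produces $\dot n + \dot d - 2 < (e+1)p$, which requires carefully combining the $\lceil\cdot\rceil$/$\lfloor\cdot\rfloor$ inequalities with Lemma \ref{A3 lemma 2.3} to show that any part $i$ with $v(i+1) \ge 1$ forces $i \ge p-1$, hence pushes $\dot n$ up by nearly $p$ per unit of $e$, leaving only the listed residual cases. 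I expect the $p=3$ sub-case to require separate hand computation because the general estimate $p^{j-2} - j + 1 \ge 1$ type inequalities used elsewhere degenerate there.
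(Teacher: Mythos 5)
First, note that the paper you are working from does not prove this lemma at all: it is quoted verbatim with a citation to [AHR] (Adelberg, Hong and Ren, Proc.\ Amer.\ Math.\ Soc.\ 136 (2008)), so there is no in-paper proof to compare against. Judged on its own, your architecture is the right one and matches how such a bound must be proved: the identity $n+d-2=pu_{p-1}+(\dot n+\dot d-2)$ correctly reduces the claim to $\dot n+\dot d-2\ge (e+1)p$; the computation $e=\sum_{i\ne p-1}\bigl(u_i\,v(i+1)+v(u_i!)\bigr)$ is correct (using $v((pu_{p-1})!)=u_{p-1}+v(u_{p-1}!)$ from (\ref{eqn 1})); and your identification of the two exceptional configurations, with $\dot n=p-1$, $\dot d=2$, $e=0$ in case (i) and $\dot n=8$, $\dot d=1$, $e=v(9)=2$ for $p=3$ in case (ii), is exactly right, including the verification $6\le 7<9$.

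The gap is in your ``decisive step'': the verification that nothing else is exceptional is asserted rather than carried out, and the heuristic you give is too weak to close it. Saying that a part with $v(i+1)\ge 1$ ``pushes $\dot n$ up by nearly $p$ per unit of $e$'' does not suffice, because the target is not $ep$ but $(e+1)p$ plus the $-2$; one needs each part $i\ne p-1$ to contribute at least $p$ to $\dot n+\dot d$ per unit of $e_i:=u_iv(i+1)+v(u_i!)$ \emph{and} a global surplus of $p+2$, and the borderline configurations are precisely where this fails or holds with equality: the part $p-1$ itself (which contributes exactly $p$ per unit and must be stripped out, as you do), parts with $i+1=p^\alpha$ (where for $p=3$, $\alpha=2$ the surplus $p^\alpha-\alpha p$ is too small, producing case (ii)), and large multiplicities $u_i\ge p$ (where $v(u_i!)\le (u_i-1)/(p-1)$ must be weighed against the $(p-1)$ extra parts). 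A complete proof has to run these ``local'' inequalities part by part and then sum them, checking equality cases. You also omit the observation that the hypothesis $n=(p-1)s_0$ forces $(p-1)\mid\dot n$, hence $\dot n\ge p-1$; without it, a single small part $g<p-1$ (e.g.\ $g=1$ for $p=5$, giving $\dot n+\dot d-2=0$) would be a spurious additional exception, so this divisibility constraint is not optional bookkeeping but a load-bearing step in limiting the enumeration to the two listed cases.
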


For any positive integer $x$, by Lemma \ref{A3 lemma 2.1} we have
$$\frac{(xp)!}{x!p^x}-(-1)^x\equiv 0 \pmod{p^{v_p(x)+1}}.$$
Then associated to the prime number $p$, we can define an
arithmetical function $k_p$ for any positive integer $x$ by
$$k_p(x):=\frac{\frac{(xp)!}{x!p^x}-(-1)^x}{p^{v(x)+1}}.$$
Clearly $k_p(x)\in \mathbf{Z}$ for any positive integer $x$ and
$k_p(1)$ is the usual {\it Wilson quotient} (see, for instance,
\cite{[L]}). We call $k_p(x)$ {\it generalized Wilson quotient}. On
the other hand, for any positive integer $x$, we define
$$h_p(x):=(-1)^{x-1}\cdot\frac{(xp-2)!}{p^xx!}.$$
Then $h_p(x)=\displaystyle \frac{(-1)^{x-1}}{px}\cdot \prod_{i=1,
p\nmid i}^{xp-2}i$. We have the following result.

\begin{lem} \label{Lemma 222} Let $s$ and $l$ be positive integers
such that $v_p(l)=N$. If $v_p(s)<N$, then we have
$$h_p(l+s)-h_p(s)\equiv g_p(l,s)\pmod{p^{N+1}},
$$
where $g_p(l,s)$ is defined by
$$g_p(l,s):=\frac{1}{ps(l+s)}\bigg((-1)^lsk_p(l)
p^{N+1}-(-1)^slk_p(s)p^{v_p(s)+1}-l\bigg)\pmod{p^{N+1}}.$$
\end{lem}

\begin{proof}
First since $v_p(s)<N$, we have
\begin{align}\nonumber
\prod_{i=1,\,p\nmid i}^{(l+s)p-2}i &=\prod_{i=1,\,p\nmid
i}^{lp-1}i\cdot\prod_{i=lp+1,\,p\nmid
i}^{(l+s)p-2}i\\
&=\prod_{i=1,\,p\nmid
i}^{lp-1}i\cdot\prod_{i=1,\,p\nmid i}^{sp-2}(lp+i)\nonumber\\
&=\prod_{i=1,\,p\nmid i}^{lp-1}i\cdot\bigg(\prod_{i=1,\,p\nmid
i}^{sp-2}i+lp\cdot\prod_{i=1,\,p\nmid
i}^{sp-2}i\cdot\sum_{j=1,\,p\nmid j}^{sp-2} \frac{1}{j}+(lp)^2\cdot
k_1\bigg)\nonumber\\
&=\prod_{i=1,\,p\nmid i}^{lp-1}i\cdot\bigg(\prod_{i=1,\,p\nmid
i}^{sp-2}i\cdot\bigg(1+lp+lp\cdot\frac{1}{2}\cdot\sum_{j=2,\,p\nmid
j}^{sp-2} (\frac{1}{j}+\frac{1}{sp-j})\bigg)+(lp)^2\cdot k_1\bigg)
\nonumber
\end{align}
\begin{align}
&=\prod_{i=1,\,p\nmid i}^{lp-1}i\cdot\bigg(\prod_{i=1,\,p\nmid
i}^{sp-2}i\cdot\bigg(1+lp+lp\cdot\frac{sp}{2}\cdot\sum^{sp-2}_{j=2,\,p\nmid
j}
\frac{1}{j(sp-j)}\bigg)+(lp)^2\cdot k_1\bigg)\nonumber\\
&\equiv(1+lp)\cdot\prod_{i=1,\,p\nmid
i}^{lp-1}i\cdot\prod_{i=1,\,p\nmid i}^{sp-2}i
\hspace{30pt}\pmod{p^{v_p(s)+N+2}},\label{gongshi 1}
\end{align}
where $k_1$ is some positive integer.

Consequently, by the definition of $k_p(s)$ we can easily show that
\begin{align}\label{gongshi 2}
\prod_{i=1,\,p\nmid
i}^{lp-1}i=\frac{(lp)!}{l!p^l}=(-1)^l+k_p(l)p^{N+1}
\end{align}and
\begin{align}\label{gongshi 3}
\prod_{i=1,\,p\nmid
i}^{sp-2}i=(-1)\cdot\frac{1}{1-sp}\cdot\frac{(sp)!}{s!p^s}
=(-\sum_{j=0}^{\infty}(sp)^j)\cdot((-1)^s+k_p(s)p^{v_p(s)+1}).
\end{align}
Then by (\ref{gongshi 1}), (\ref{gongshi 2}) and (\ref{gongshi 3})
we can deduce that

\begin{align*}
h_p(l+s)-h_p(s)&=\frac{(-1)^{l+s-1}}{p(l+s)}\cdot\prod_{i=1,\,p\nmid
i}^{(l+s)p-2}i-\frac{(-1)^{s-1}}{p\,s}\cdot\prod_{i=1,\,p\nmid
i}^{sp-2}i\\
&\equiv\frac{(-1)^{l+s-1}}{p(l+s)}\cdot(1+lp)\cdot\prod_{i=1,\,p\nmid
i}^{lp-1}i\cdot\prod_{i=1,\,p\nmid
i}^{sp-2}i-\frac{(-1)^{s-1}}{p\,s}\cdot\prod_{i=1,\,p\nmid
i}^{sp-2}i \\
&=\bigg(\frac{(-1)^{s-1}}{p\,s}\cdot\prod_{i=1,\,p\nmid
i}^{sp-2}i\bigg)\bigg(\frac{(-1)^ls(1+lp)}{l+s}\cdot\prod_{i=1,\,p\nmid
i}^{lp-1}i-1\bigg)\\
&=\bigg(\frac{(-1)^{s-1}}{p\,s}\cdot\prod_{i=1,\,p\nmid
i}^{sp-2}i\bigg)\bigg(\frac{(-1)^ls(1+lp)}{l+s}
\cdot((-1)^l+k_p(l)p^{N+1})-1\bigg)
\\
 &\equiv\bigg(\frac{(-1)^{s-1}}{p\,s}\cdot\prod_{i=1,\,p\nmid
i}^{sp-2}i\bigg)\bigg(\frac{s}{l+s}((-1)^lk_p(l)p^{N+1}+lp)-
\frac{l}{l+s}\bigg)\\
&\equiv\bigg((-1)^{s-1}\cdot\prod_{i=1,\,p\nmid
i}^{sp-2}i\bigg)\bigg(\frac{(-1)^lk_p(l)p^{N}+l}{l+s}-
\frac{l}{ps(l+s)}\bigg)\\
&=\bigg((-1)^{s}\cdot\bigg(\sum_{j=0}^{\infty}(sp)^j\bigg)\cdot((-1)^s+
k_p(s)p^{v_p(s)+1})\bigg)\bigg(\frac{(-1)^lk_p(l)p^{N}+l}{l+s}-
\frac{l}{ps(l+s)}\bigg)\\
&\equiv(1+sp)(1+(-1)^sk_p(s)p^{v_p(s)+1})
\bigg(\frac{(-1)^lk_p(l)p^{N}+l}{l+s}-
\frac{l}{ps(l+s)}\bigg)\\
&\equiv g_p(l, s)\pmod{p^{N+1}}
\end{align*}
as required. Thus Lemma \ref{Lemma 222} is proved.
\end{proof}

Using the generalized Wilson quotient, we can now establish the
universal Kummer congruences modulo powers of odd primes. This is the
first main result of this paper.

\begin{thm}\label{thm 0}
Let $n=m+l(p-1)$ and $p^N\mid l$. Suppose that $m=s(p-1)$ and $s\geq
\lceil \frac{N+2}{p-2}\rceil$.
\begin{enumerate}
\item[{\rm (i)}] If $p\geq 5$, then
$$\frac{\hat{B}_n}{n}\equiv c_{p-1}^l\frac{\hat{B}_m}{m}
+g_p(l,s)c_{p-1}^{l+s}
 \pmod{{p^{N+1}\mathbf{Z}_p[c_1,\ldots ,c_n]}}.$$
\item[{\rm (ii)}] If $p=3$, then
$$\frac{\hat{B}_n}{n}\equiv c_2^l\frac{\hat{B}_m}{m}+g_3(l,s)c_{2}^{l+s} +\Psi \cdot
c_2^{l+s-4}c_8\pmod {3^{N+1}\mathbf{Z}_3[c_1,\ldots ,c_n]},$$
\end{enumerate}
where $\Psi=0$ if $s\equiv1\pmod 3$, $\Psi=-l$ if $s\equiv0\pmod 3$,
and $\Psi=l$ if $s\equiv-1\pmod 3$.

\end{thm}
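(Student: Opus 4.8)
The plan is to compute directly from the Lagrange inversion formula (\ref{1})--(\ref{2}) and to match the coefficient of each monomial $c^u$ on both sides. Put $s_0=n/(p-1)=s+l$, call the partition with $u_{p-1}=s_0$ (and all other $u_j=0$) the \emph{pure power} of $n$, and --- when $p=3$ --- call the partition with $u_2=s_0-4$, $u_8=1$ the \emph{special partition}. For a partition $u$ of $n$ with $u_{p-1}\ge l$, let $\bar u$ be the partition of $m$ with $\bar u_{p-1}=u_{p-1}-l$ and $\bar u_j=u_j$ for $j\ne p-1$; every partition of $m$ is $\bar u$ for exactly one such $u$. Since $c_{p-1}^l\frac{\hat B_m}{m}=\sum_{\omega(\bar u)=m}\tau_{\bar u}c_{p-1}^l c^{\bar u}=\sum_{\omega(u)=n,\,u_{p-1}\ge l}\tau_{\bar u}c^{u}$, subtracting from (\ref{1}) gives the exact identity
\begin{equation*}
\frac{\hat B_n}{n}-c_{p-1}^l\frac{\hat B_m}{m}=\sum_{\substack{\omega(u)=n\\u_{p-1}<l}}\tau_u c^u+\sum_{\substack{\omega(u)=n\\u_{p-1}\ge l}}(\tau_u-\tau_{\bar u})c^u,
\end{equation*}
so the theorem reduces to three statements: \textbf{(A)} $v(\tau_u)\ge N+1$ for every non-exceptional $u$ with $u_{p-1}<l$; \textbf{(B)} $\tau_u\equiv\tau_{\bar u}\pmod{p^{N+1}}$ for every non-exceptional $u$ with $u_{p-1}\ge l$; and \textbf{(C)} the pure power and the special partition contribute exactly the correction terms displayed in the statement.

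Claim (C) is essentially a matter of definition. For the pure power, $\bar u$ is the all-$(p-1)$ partition of $m$, and dividing the defining congruences $p^{1+v(n)}\tau_u\equiv z(p,n)\pmod{p^{N+2+v(n)}}$ and $p^{1+v(m)}\tau_{\bar u}\equiv z(p,m)\pmod{p^{N+2+v(m)}}$ yields $\tau_u-\tau_{\bar u}\equiv\frac{z(p,n)}{p^{1+v(n)}}-\frac{z(p,m)}{p^{1+v(m)}}\pmod{p^{N+1}}$, the coefficient of $c_{p-1}^{l+s}=c_{p-1}^{s_0}$. For $p=3$ and the special partition one has $d(u)=s_0-3$, $\gamma_u=3^{s_0-2}(s_0-4)!$ and $n+d(u)-2=3s_0-5$ (with the analogous data for $m$), and expanding $\frac{(3s_0-5)!}{3^{s_0-2}(s_0-4)!}$ by means of Lemma \ref{A3 lemma 2.0}, the factors straddling multiples of $3$ contribute, modulo $3^{N+1}$, exactly the residue $\Psi$, whose value depends only on $s\bmod 3$.

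For (B) the workhorse is Lemma \ref{A3 lemma 2.4}. Fix $u$ with $u_{p-1}=l+q$, $q\ge0$, so $\bar u_{p-1}=q$ and $d(\bar u)=d(u)-l$. With $\gamma'=\prod_{j\ne p-1}(j+1)^{u_j}u_j!$ we get $\gamma_u=p^{l+q}(l+q)!\gamma'$, $\gamma_{\bar u}=p^{q}q!\gamma'$, and $n+d(u)-2=(m+d(\bar u)-2)+lp$. Writing $m+d(\bar u)-2=qp+a$ and $e=v(\gamma')$, and using $p^N\mid l$, parts (ii)--(iv) of Lemma \ref{A3 lemma 2.4} give
\begin{equation*}
\frac{(n+d(u)-2)!}{(l+q)!\,p^{\,l+q+e}}\equiv(-1)^l\frac{(m+d(\bar u)-2)!}{q!\,p^{\,q+e}}\pmod{p^{N+1}}
\end{equation*}
as soon as $a$ is large enough relative to $e$; multiplying by $p^{e}/\gamma'$ and using $(-1)^{d(u)-1}=(-1)^l(-1)^{d(\bar u)-1}$ turns this into $\tau_u\equiv\tau_{\bar u}\pmod{p^{N+1}}$. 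Writing $\dot{\bar u}$ for the part of $\bar u$ with the $(p-1)$'s removed (a partition of $\dot m\equiv 0\pmod{p-1}$), one has $a=\dot m+d(\dot{\bar u})-2$ and $e=v(\gamma_{\dot{\bar u}})$; a short analysis --- with Lemmas \ref{lem -1}--\ref{lem -2} and Lemma \ref{AHR thm 2.1} ruling out all other configurations --- shows that the size condition on $a$ fails only for $\dot{\bar u}=\varnothing$ (the pure power) and, for $p=3$, $\dot{\bar u}$ a single part $8$ (the special partition, where $e=2$ but $a=7<3p$). Thus (B) holds for all non-exceptional $u$.

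The crux is (A). Given a non-exceptional $u$ with $u_{p-1}<l$, delete its $u_{p-1}$ parts equal to $p-1$ to obtain $\dot u$, a partition of $\dot n=(s_0-u_{p-1})(p-1)$; from $n+d(u)-2=(\dot n+d(\dot u)-2)+u_{p-1}p$ and formulas (\ref{eqn 1}) and (\ref{eqn 3}) one obtains
\begin{equation*}
v(\tau_u)=v(\tau_{\dot u})+\#\{\text{base-}p\text{ carries when adding }\dot n+d(\dot u)-2\text{ and }u_{p-1}p\},
\end{equation*}
so $v(\tau_u)\ge v(\tau_{\dot u})$, while $s_0-u_{p-1}\ge s+1$. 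One then argues by cases on $u_{p-1}$ and $d(u)$. For $u$ of small degree, Corollary \ref{cor 1} (applied to $n$, or to $\dot n$ after the deletion, with a suitable decomposition $(m'+i')p-i'$) forces $v(\tau_u)\ge s(p-2)-1\ge N+1$ by the hypothesis $s\ge\lceil\frac{N+2}{p-2}\rceil$. For the remaining $u$, one combines the bound $v(\tau_{\dot u})\ge 1+v\big((v(\gamma_{\dot u})+1)!\big)$ furnished by Lemma \ref{AHR thm 2.1} with the carry count above, exploiting that $p^N\mid l$ makes $u_{p-1}$ (which is forced to be close to $l$ in the delicate cases) carry a long block of base-$p$ digits equal to $p-1$, so that the addition cascades into at least $N$ carries, together with $s_0\ge l\ge p^N$. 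When $N=0$, the whole of (A) is just Lemma \ref{AHR thm 2.1}, since all of its exceptional partitions have $u_{p-1}\ge l$. The main obstacle I foresee is organizing this case split so that Corollary \ref{cor 1}, Lemma \ref{AHR thm 2.1}, and the carry count jointly cover every non-exceptional $u$ with $u_{p-1}<l$; granting (A)--(C), the theorem follows by reading off coefficients, the only extra feature for $p=3$ being that the $c_8$-monomial must be tracked throughout, producing the $\Psi$-term in part (ii).
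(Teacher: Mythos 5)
Your skeleton coincides with the paper's: the same splitting of $\frac{\hat B_n}{n}-c_{p-1}^l\frac{\hat B_m}{m}$ into terms with $u_{p-1}\ge l$ (compared with $\tau_{\bar u}$) and terms with $u_{p-1}<l$; the same use of Lemma \ref{A3 lemma 2.4} with $a=\dot n+\dot d-2$, $e=v(\gamma_{\dot u})$, with Lemma \ref{AHR thm 2.1} supplying $a\ge(e+1)p$ outside the exceptions; the same two correction terms (the pure power via the definition of $z(p,\cdot)$, and for $p=3$ the partition $u_2=l+s-4$, $u_8=1$, whose residue $\Psi$ the paper extracts by writing $(l+s-3)(l+s-2)=l^2+l(2s-5)+(s-2)(s-3)$ and applying Lemmas \ref{A3 lemma 2.1} and \ref{A3 lemma 2.4}(ii) -- your ``factors straddling multiples of $3$'' only gestures at this, but it is a finite computation). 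Your treatment of the small-degree terms in (A) is also the paper's: $n+d-2<lp$ forces $x:=l-u_{p-1}\ge m$, so $\dot u$ is a partition of $(m+i)p-i$ with $d(\dot u)\le i+1$ and Corollary \ref{cor 1} applies.

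The genuine gap is in (A) for the terms with $u_{p-1}<l$ and $n+d-2\ge lp$, which you yourself flag as ``the main obstacle.'' The mechanism you propose there -- that $u_{p-1}$ is ``forced to be close to $l$,'' so that adding $u_{p-1}p$ to $\dot n+\dot d-2$ cascades into at least $N$ base-$p$ carries -- is not correct: $u_{p-1}=l-x$ ranges over all of $\{0,\dots,l-1\}$ in this case, and when $x$ is large the carry count can fall far below $N$ (e.g.\ $p=5$, $l=25$, $u_4=5$, $\dot n+\dot d-2=(400)_5$: a single carry, versus $N=2$). The identity $v(\tau_u)=v(\tau_{\dot u})+\#\{\text{carries}\}$ is true, but the generic bound $v(\tau_{\dot u})\ge 1+v((e+1)!)$ together with your carry estimate does not reach $N+1$. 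What actually closes this case in the paper is a different dichotomy: take $k$ with $(l+k)p\le n+d-2<(l+k+1)p$; since the interval $(l-x,\,l+k]$ contains the multiple $l$ of $p^N$, one gets $v((l+k)!)-v((l-x)!)\ge v(l)$ and hence $v(\tau_u)\ge k+x+v(l)-e$; if this were less than $v(lp)=v(l)+1$ then $e\ge k+x$, and Lemma \ref{AHR thm 2.1} would give $n+d-2\ge(l-x+e+1)p\ge(l+k+1)p$, contradicting the choice of $k$. So $v(\tau_u)\ge v(lp)\ge N+1$. Without this step (or a correct substitute) claim (A), and hence the theorem, is not proved. (A smaller inaccuracy: for $p=3$ and $s\in\{2,3\}$ the exceptional partition $u_2=l+s-4$, $u_8=1$ has $u_2<l$, so your parenthetical assertion that all exceptional partitions of Lemma \ref{AHR thm 2.1} have $u_{p-1}\ge l$ fails, and this partition must be carved out of (A) exactly as the paper does.)
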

\noindent{\it Proof.} First consider the terms $\tau_uc^u$ of
$\hat{B}_n/n$ where $u_{p-1}\geq l$. Let $u'_{p-1}=u_{p-1}-l:=q$ and
$u'_i=u_i$ for $i\neq p-1$. Then $w(u')=m$ and $c^{u'}\cdot
c_{p-1}^l=c^u$. It follows from (\ref{1}) that
$$\frac{\hat{B}_n}{n}-c_{p-1}^l\frac{\hat{B}_m}{m}=\sum_{w(u)=n, u_{p-1}\ge
l}(\tau _u-\tau _{u'})c^u+\sum_{w(u)=n, u_{p-1}<l}\tau _uc^u.$$

If $u_{p-1}=n/(p-1)=l+s$, then $u'_{p-1}=s=m/(p-1)$. Hence by the
definition and Lemma \ref{Lemma 222} we get
$$
\tau_u-\tau_{u'}=h_p(l+s)-h_p(s)\equiv g_p(l,s)\pmod {p^{N+1}}.
$$

Next assume $u_{p-1}<l+s$. Now let $\dot{u}_{p-1}=0$ and
$\dot{u}_i=u_i$ for $i\neq p-1$.

Let $v(\gamma_{\dot{u}})=e$. Then $\gamma_{\dot{u}}=\varepsilon p^e$
where $p\nmid \varepsilon$. Since $d'=d(u')=d(u)-l$ and
$\dot{d}=d(\dot{u})=d'-q$, we have
$$n+d-2=lp+m+d'-2=lp+qp+\dot{n}+\dot{d}-2.$$
Also $\gamma_u=(l+q)!p^{l+q}\gamma_{\dot{u}}$ and
$\gamma_{u'}=q!p^q\gamma_{\dot{u}}$.

 If $e=0$, then $(\gamma_{\dot{u}},p)=1$. So applying Lemma \ref{A3 lemma 2.4} (ii) with
 $a=\dot{n}+\dot{d}-2$ gives us
$$\tau_u\equiv \tau_{u'}\pmod {p^{N+1}}.$$

If $e>0$, then by Lemma \ref{AHR thm 2.1} we have
$\dot{n}+\dot{d}-2\geq (e+1)p$ with the exception of case (ii) where
$\dot{n}=8$, $e=2$, and $u_2=l+s-4$, $u_8=1$. Hence by Lemma \ref{A3
lemma 2.4} (iv), $\tau_u\equiv \tau_{u'}\pmod {p^{N+1}}$ without the
exceptional case.

We now turn to the exceptional term, which occurs if and only if
$p=3$, $u_2=l+s-4$, $u_8=1$. In this case, $d=l+s-4+1=l+s-3$,
$d'=s-3$. Then
$$n+d-2=2(l+s)+l+s-3-2=3(l+s)-5,$$
and
$$n'+d'-2=m+d'-2=3s-5.$$
Also we have
$$\gamma_u=3^{l+s-4}\cdot9\cdot(l+s-4)!=3^{l+s-2}(l+s-4)!$$
and $$\gamma_{u'}=3^{s-4}\cdot9\cdot(s-4)!=3^{s-2}(s-4)!.$$ Hence by
Lemmas \ref{A3 lemma 2.1} and \ref{A3 lemma 2.4} (ii)
\begin{align*}
\tau_u&=(-1)^{d-1}\frac{(3(l+s)-5)!}{3^{l+s-2}(l+s-4)!}\\
&=(-1)^{d-1}\frac{(3(l+s)-5)!}{3^{l+s-2}(l+s-2)!}(l+s-3)(l+s-2)\\
&=(-1)^{d-1}\frac{(3(l+s)-5)!}{3^{l+s-2}(l+s-2)!}(l^2+l(2s-5)+(s-2)(s-3))\\
&\equiv
(-1)^{d-1}(-1)^l\frac{(3s-5)!}{3^{s-2}(s-2)!}l(2s-5)+(-1)^{d-1}(-1)^l\frac{(3s-5)!}{3^{s-2}(s-2)!}(s-2)(s-3)\\
&\equiv(-1)^{s-4}l\cdot\frac{(3s-5)!}{3^{s-2}(s-2)!}(2s-5)+\tau_{u'}\pmod{3^{N+1}}.
\end{align*}
But by Lemma \ref{A3 lemma 2.0},
\begin{align*}
  \frac{(3s-5)!}{3^{s-2}(s-2)!}(2s-5)&=\frac{(3s-6)!}{3^{s-2}(s-2)!}(3s-5)(2s-5)\\
  &\equiv (-1)^{s-2}(-1)(2s-2)\pmod 3.
\end{align*}
So $$\tau_u\equiv
(-1)^{s-4+s-2+1}(2s-2)l+\tau_{u'}=(-1)l(2s-2)+\tau_{u'}\pmod{3^{N+1}}.$$
Thus $\tau_u\equiv \tau_u'\pmod {3^{N+1}}$ if $s\equiv1\pmod 3$,
$\tau_u\equiv \tau_u'-l \pmod {3^{N+1}}$ if $s\equiv0\pmod 3$ and
$\tau_u\equiv \tau_{u'}+l\pmod {3^{N+1}}$ if $s\equiv-1\pmod 3$.

We can now deal with the terms where $u_{p-1}<l$. In what follows
we assume that $u_{p-1}=l-x$ with $x\geq 1$. To finish the proof, it
is sufficient to show that if $w(u)=n$ and $u_{p-1}<l$ then
$\tau_u\equiv0 \pmod {p^{N+1}}$ if $s\geq \lceil
\frac{N+2}{p-2}\rceil$, with the single exception where $p=3$ and
$u$ is given by $u_{p-1}=l+s-4$, $u_8=1$.

If $n+d-2\geq lp$, then there exists an integer $k$ such that
$(l+k)p\le n+d-2< (l+k+1)p $. With our usual notation,
$\dot{n}=n-(l-x)(p-1)=m+x(p-1)$. Let $e=v(\gamma_{\dot{u}})$. Since
$u_{p-1}=l-x<l$, we have
\begin{align}\label{eqn 5}
n+d-2\geq (l-x+e+1)p
\end{align}
with the only exceptional case (ii) of Lemma \ref{AHR thm 2.1} which
was previously considered.

On the other hand, we have
\begin{align*}
  v(\tau_u)&=v((n+d-2)!)-(l-x)-v((l-x)!)-e\\
  &\geq v(((l+k)p)!)-(l-x)-v((l-x)!)-e\\
  &=(l+k)+v((l+k)!)-(l-x)-v((l-x)!)-e\\
  &\geq k+x+v(l)-e.
\end{align*}
Suppose now that $v(\tau_u)<v(lp)$. Then $e>k+x-1$, i.e., $e\geq
k+x$. But by (\ref{eqn 5}) we have $$n+d-2\geq
(l-x+e+1)p\geq(l-x+k+x+1)p=(l+k+1)p$$ which contradicts
$(l+k+1)p>n+d-2$. Thus in this case we infer that $v(\tau_u)\ge
v(lp)\ge N+1$.

Now it remains to consider the case $n+d-2<lp$. In this case, we
have $n+d=m+l(p-1)+d<lp+2$, i.e., $m+d\leq l+1$. Since
$u_{p-1}=l-x<d\leq l+1-m$, $x\geq m$. One may let $x=m+i$ with
$i\geq 0$. Then
$$w(\dot{u})=n-u_{p-1}(p-1)=m+x(p-1)=(m+i)p-i$$ and
$$d(\dot{u})=d-(l-x)\leq x+1-m=i+1.$$
But $u_{p-1}=l-x$. So $n+d-2\geq (l-x)p+\dot{n}+\dot{d}-2$ and
$v(\tau_u)\geq v(\tau_{\dot{u}})$. Since $s\geq \lceil
\frac{N+2}{p-2}\rceil$, then by assumption and replacing $n$ and $u$
by $\dot{n}=(m+i)p-i$ and $\dot{u}$, respectively, Corollary
\ref{cor 1} gives $\tau_u\equiv 0 \pmod {p^{N+1}}$.

The proof of Theorem \ref{thm 0} is complete. \hfill$\Box$

\section{Auxiliary results for powers of 2}

In the current section, we deal with the even prime $p=2$ case.
In order to establish the universal Kummer congruences modulo powers of 2,
we provide a detailed 2-adic analysis to many kinds of factorials and double
factorials. First we give several congruences modulo powers of 2 about double
factorials.

\begin{lem}\cite{[AHR]}\label{AHR Lem 4.1} Each of the following is true:

{\rm (i)} If $k$ is odd, then
$(2k-1)!!\equiv(-1)^{\frac{k-1}{2}}\pmod 4$.

{\rm (ii)} If $k\ge 1$, then $(4k-3)!!\equiv (-1)^{k-1}\pmod
{16}$.

{\rm (iii)} If $k\ge 1$ and $N\ge 3$, then $(k2^N-3)!!\equiv
-1\pmod {2^{N+1}}$.
\end{lem}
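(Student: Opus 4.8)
I would treat the three parts separately, since only the third requires real work.

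\emph{Parts (i) and (ii).} For (i), the $j$-th odd number satisfies $2j-1\equiv(-1)^{j-1}\pmod 4$, so $(2k-1)!!=\prod_{j=1}^{k}(2j-1)\equiv(-1)^{\lfloor k/2\rfloor}\pmod 4$; when $k$ is odd, $\lfloor k/2\rfloor=(k-1)/2$, which is the claimed exponent. For (ii) I would induct on $k$: the base case $k=1$ is $1!!=1=(-1)^0$, and since $(4(k+1)-3)!!=(4k+1)!!=(4k-3)!!\,(4k-1)(4k+1)$ with $(4k-1)(4k+1)=16k^2-1\equiv-1\pmod{16}$, the inductive step multiplies the sign by $-1$, giving $(-1)^{(k+1)-1}$.

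\emph{Part (iii), the case $k=1$.} I would first prove $(2^N-3)!!\equiv-1\pmod{2^{N+1}}$ for all $N\ge 3$ by induction on $N$, the base case $N=3$ being $5!!=15\equiv-1\pmod{16}$. The inductive step rests on the recursion
\[(2^{N+1}-3)!!\equiv-\bigl((2^N-3)!!\bigr)^2\pmod{2^{N+2}}.\]
To get this, note that the odd integers in $[2^N-1,\,2^{N+1}-3]$ are $2^N-1$ together with $2^N+b$, where $b$ ranges over the odd integers in $[1,\,2^N-3]$; hence $(2^{N+1}-3)!!$ equals $(2^N-3)!!\cdot(2^N-1)\cdot\prod_b(2^N+b)$. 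Working modulo $2^{N+2}$ and using $b^2\equiv 1\pmod 4$, one has $2^N+b\equiv b(1+2^Nb)\pmod{2^{N+2}}$, so $\prod_b(2^N+b)\equiv(2^N-3)!!\cdot\bigl(1+2^N\sum_b b\bigr)\pmod{2^{N+2}}$, the higher-order terms vanishing because $2N\ge N+2$. A direct count gives $\sum_b b=(2^{N-1})^2-(2^N-1)\equiv 1\pmod 4$ (this is where $N\ge 3$ is used), so $(2^N-1)\prod_b(2^N+b)\equiv(2^N-1)(2^N+1)(2^N-3)!!=(2^{2N}-1)(2^N-3)!!\equiv-(2^N-3)!!\pmod{2^{N+2}}$, which yields the recursion. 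Finally, if $(2^N-3)!!\equiv-1\pmod{2^{N+1}}$ then $\bigl((2^N-3)!!\bigr)^2\equiv 1\pmod{2^{N+2}}$, so the recursion gives $(2^{N+1}-3)!!\equiv-1\pmod{2^{N+2}}$, closing the induction.

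\emph{Part (iii), general $k$.} I would write $(k2^N-3)!!=(2^N-3)!!\cdot\prod_{j=1}^{k-1}R_j$, where $R_j$ is the product of the $2^{N-1}$ odd integers in $[j2^N-1,\,(j+1)2^N-3]$, and show $R_j\equiv 1\pmod{2^{N+1}}$. The point is that $R_jR_{j+1}$ is a product of $2^N$ consecutive odd integers, hence congruent modulo $2^{N+1}$ to the product of all units of $\mathbf{Z}/2^{N+1}\mathbf{Z}$; by the standard pairing argument this equals the product $1\cdot(-1)\cdot(2^N-1)\cdot(2^N+1)=1-2^{2N}$ of the elements of order at most $2$, so $R_jR_{j+1}\equiv 1\pmod{2^{N+1}}$. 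Moreover $(2^N-3)!!\,R_1=(2^{N+1}-3)!!$, and both $(2^N-3)!!$ and $(2^{N+1}-3)!!$ are $\equiv-1\pmod{2^{N+1}}$ by the case $k=1$ (the latter applied with $N+1$ in place of $N$), so $R_1\equiv 1\pmod{2^{N+1}}$; induction on $j$ then gives $R_j\equiv 1\pmod{2^{N+1}}$ for all $j$, and hence $(k2^N-3)!!\equiv(2^N-3)!!\equiv-1\pmod{2^{N+1}}$.

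The step I expect to be the main obstacle is the inductive step for $k=1$: the statement is modulo $2^{N+1}$, but the recursion must be established modulo $2^{N+2}$ to leave room for squaring, so one has to keep careful track of the quadratic cross terms and check that $\sum_b b$ is not merely odd but $\equiv 1\pmod 4$ --- exactly the place where the hypothesis $N\ge 3$ (rather than $N\ge 2$) is needed.
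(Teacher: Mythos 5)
The paper does not prove this lemma at all: it is imported verbatim from \cite{[AHR]}, so there is no internal proof to compare yours against. That said, your argument is correct and self-contained. Parts (i) and (ii) are fine. For part (iii) with $k=1$, the recursion $(2^{N+1}-3)!!\equiv-\bigl((2^N-3)!!\bigr)^2\pmod{2^{N+2}}$ is established correctly (the first-order expansion of $\prod_b(2^N+b)$ with the cross terms killed by $2^{2N}\equiv 0\pmod{2^{N+2}}$, and $\sum_b b=4^{N-1}-2^N+1\equiv 1\pmod 4$), and the squaring step $(-1+2^{N+1}t)^2\equiv 1\pmod{2^{N+2}}$ closes the induction; this is essentially the same mechanism the paper itself uses later in its Lemma 4.2(iii), where it proves the sharper congruence $(2^{N+1}-3)!!\equiv-((2^N-3)!!)^2\pmod{2^{N+4}}$. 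Your reduction of general $k$ to $k=1$ via the blocks $R_j$ and the Wilson-type fact that $2^N$ consecutive odd integers multiply to $1$ modulo $2^{N+1}$ is also sound (and arguably cleaner than reducing $k$ modulo $4$ and cubing, which is what the paper does in the analogous situation). One small misattribution: the hypothesis $N\ge 3$ is not needed where you flag it --- the computation $\sum_b b\equiv 1\pmod 4$ and the whole recursion already hold for $N=2$ (indeed $(2^3-3)!!=15\equiv-1\cdot 1^2\pmod{16}$); what actually forces $N\ge 3$ is the base case of the induction, since $(2^2-3)!!=1\not\equiv-1\pmod{2^3}$.
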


\begin{lem}\label{lemma 1}
Let  $k\ge 1$,  $N\ge 3$ and $a\ge 2$ be integers. Then
\begin{enumerate}
\item[(i)] $\frac{(k2^N+2a-3)!!}{(k2^N+1)!!}\equiv \left\{
\begin{array}{ll}(2a-3)!!\ \ \ \pmod{2^{N+1+\min\{v(a),\,N-1\}}}, &~{\rm if}~ 2\mid a,\\
(2a-3)!!+k2^N \pmod{2^{N+1}}, &~{\rm if }~ 2\nmid a.
\end{array}
\right.$ \item[(ii)] $(k2^N+2a-3)!!\equiv \left\{
\begin{array}{lll}
(2a-3)!!&\pmod {2^{N+1}},&{\rm if}~2\mid a,\\
(2a-3)!!+k2^N&\pmod {2^{N+1}},&{\rm if}~2\nmid a.
\end{array}
\right. $
\item[(iii)] If $k$ is odd, we have
$$
(k2^N-3)!!\equiv\left\{
\begin{array}{lll}
-1+(-1)^{\frac{k-1}{2}}\cdot 2^{N+1}
 &\pmod{2^{N+3}},&{\rm if}~N=3,\\
-1+(-1)^{\frac{k+1}{2}}\cdot 2^{N+1}
 &\pmod{2^{N+3}},&{\rm if}~N\ge4.
\end{array}
\right.
$$
In particular, we have $(k2^N-3)!!\equiv-1+2^{N+1}\pmod
{2^{N+2}}.$
\end{enumerate}
\end{lem}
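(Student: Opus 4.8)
The plan is to treat the three parts as successive refinements, each building on the previous one by peeling off a controlled number of factors from a double factorial. The central computational device throughout is the telescoping identity
\[
(k2^N+2a-3)!! = (2a-3)!!\prod_{j=1}^{a-1}\bigl(k2^N+2j-1\bigr),
\]
which already gives part (ii) modulo $2^{N+1}$ once we expand the product: every cross term containing $k2^N$ as a factor is $\equiv 0\pmod{2^{N+1}}$ unless it is the linear term $k2^N\sum_{j=1}^{a-1}(2j-1)^{-1}\prod_{i\ne j}(2i-1)$, i.e.\ $k2^N\cdot\frac{(2a-3)!!}{1}\cdot\bigl(\text{sum of }(2j-1)^{-1}\bigr)$. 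One then shows that the relevant weighted sum of reciprocals of odd numbers is $\equiv 0\pmod 2$ when $a$ is even and $\equiv 1\pmod 2$ when $a$ is odd — this is an elementary parity count on $\{1,3,\dots,2a-3\}$, pairing residues, and is where I expect the only genuine (if mild) obstacle to lie. Part (i) is the same computation but keeping track of one more 2-adic digit: the quadratic term in the expansion of the product contributes $\binom{?}{2}$-type coefficients times $2^{2N}$, which is absorbed since $2N\ge N+1+v(a)$ once $v(a)\le N-1$, so the refinement $2^{N+1+\min\{v(a),N-1\}}$ in the even case comes precisely from analyzing $v$ of that reciprocal-sum coefficient together with the $v(a)$ coming from the number of terms; the odd case is identical to (ii).

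For part (iii) I would start from Lemma \ref{AHR Lem 4.1}(iii), which already gives $(k2^N-3)!!\equiv -1\pmod{2^{N+1}}$, and sharpen it two more 2-adic places. Write $(k2^N-3)!! = \prod_{1\le j\le k2^N-3,\ 2\nmid j} j$ and split the factors into residue classes modulo $2^{N}$ (or modulo $8$, tuned to whether $N=3$ or $N\ge 4$). The product of all odd residues modulo $2^{N}$ in a complete block is $\equiv -1\pmod{2^{N}}$ by the standard Wilson-type result for $(\mathbf Z/2^N\mathbf Z)^\times$, and there are $k$ complete blocks less the three missing top factors $k2^N-1,k2^N-3$ (and the convention that the product runs to $k2^N-3$ means we are exactly three short of $k$ full blocks); carefully accounting for $(-1)^k$ from the block products, the two omitted factors $\equiv -1,-3\pmod{2^N}$, and the correction between "product $\equiv -1\pmod{2^N}$" and "product $\equiv -1 + \epsilon\,2^{N}\pmod{2^{N+1}}$" produces the stated $\pm 2^{N+1}$ term with sign governed by $(-1)^{(k\mp1)/2}$. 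The split into the cases $N=3$ and $N\ge 4$ is forced because the structure of $(\mathbf Z/2^N\mathbf Z)^\times$ (cyclic of order $2$ versus $\mathbf Z/2\times\mathbf Z/2^{N-2}$ for $N\ge 3$, but with the square-class subtlety showing up first at $N=4$) changes which of $\tfrac{k-1}{2}$, $\tfrac{k+1}{2}$ appears.

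The main obstacle I anticipate is bookkeeping the sign in part (iii): one must track simultaneously the parity contribution $(-1)^{\lfloor (k2^N-3)/(2\cdot 2^{N-1})\rfloor}$ from counting full residue blocks, the Wilson sign from each block, and the quadratic correction term, and show they combine into exactly $(-1)^{(k-1)/2}$ (for $N=3$) or $(-1)^{(k+1)/2}$ (for $N\ge 4$). A clean way to sidestep some of this is induction on $k$: verify the base case $k=1$ directly (for $k=1$ this is just $(2^N-3)!!$, which can be evaluated using Lemma \ref{AHR Lem 4.1}(iii) plus a short explicit computation), and then for the inductive step use $(k2^N-3)!! = ((k-2)2^N-3)!!\cdot\prod_{j}\bigl((k-2)2^N+2j-1\bigr)$ over one length-$2^{N}$ window of odd integers, where the window product is $\equiv -1\pmod{2^{N}}$ and its refinement mod $2^{N+3}$ is a single tractable computation independent of $k$; stepping $k$ by $2$ keeps the sign $(-1)^{(k\mp1)/2}$ advancing correctly. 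The final "in particular" clause is then immediate: reducing either case mod $2^{N+2}$ kills the $\pm 2^{N+1}$ discrepancy against $+2^{N+1}$ only if the coefficient is odd, which it is, so $(k2^N-3)!!\equiv -1+2^{N+1}\pmod{2^{N+2}}$ in both cases (noting $-2^{N+1}\equiv 2^{N+1}\pmod{2^{N+2}}$).
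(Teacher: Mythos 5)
Your plan for parts (i)--(ii) is in the right spirit (expand the product of shifted odd numbers, isolate the linear term in $k2^N$, control a sum of reciprocals by pairing), but the central identity you build on is false: $(k2^N+2a-3)!!$ is the product of \emph{all} odd numbers up to $k2^N+2a-3$, so it contains every odd factor between $2a-1$ and $k2^N-1$, none of which appear in $(2a-3)!!\prod_{j=1}^{a-1}(k2^N+2j-1)$ (already for $k=1$, $N=3$, $a=2$ the two sides are $945$ and $9$). The correct decomposition is $(k2^N+2a-3)!!=(k2^N+1)!!\cdot(k2^N+3)(k2^N+5)\cdots(k2^N+2a-3)$, and passing from the quotient in (i) to the plain double factorial in (ii) requires separately knowing $(k2^N+1)!!\equiv 1\pmod{2^{N+1}}$, which the paper extracts from Lemma \ref{AHR Lem 4.1}(iii); your write-up never supplies this ingredient. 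Relatedly, your parity claim for the reciprocal sum is wrong for the index set $\{1,3,\dots,2a-3\}$: that set is not stable under $j\mapsto 2a-j$ (the element $1$ has no partner), and the stray term $(2a-3)!!/1$ is odd, so your sum is odd exactly when $a$ is \emph{even} --- the reverse of what you assert. The paper works with $S=\{3,5,\dots,2a-3\}$, which \emph{is} stable under $j\mapsto 2a-j$; each pair contributes $2a\cdot(2a-3)!!/(j(2a-j))$, and that explicit factor $2a$ is precisely the source of the $v(a)$ in the refined modulus $2^{N+1+\min\{v(a),N-1\}}$ of part (i). With your index set the natural pairing sums to $2a-2$, whose $2$-adic valuation is only $1$ for even $a$, so the refinement would not come out.

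For part (iii) your route is genuinely different from the paper's, but as described it also breaks. The Wilson-type sign is wrong: for $N\ge 3$ the product of all odd residues modulo $2^N$ is $+1$, not $-1$, because $(\mathbf{Z}/2^N\mathbf{Z})^\times$ has three elements of order two whose product is $1$. And in the proposed induction on $k$, the window product $\prod_j\bigl((k-2)2^N+2j-1\bigr)$ taken modulo $2^{N+3}$ is \emph{not} independent of $k$ (it depends on $k$ modulo $8$), which is exactly why the answer distinguishes $k\equiv 1$ from $k\equiv 3\pmod 4$. The paper avoids all of this by bootstrapping: part (ii) applied with $N$ replaced by $N+2$ reduces $(k2^N-3)!!$ modulo $2^{N+3}$ to the two cases $k=1$ and $k=3$; the case $k=3$ is reduced to $\bigl((2^N-3)!!\bigr)^3$ by part (i); and $(2^N-3)!!\equiv -1-2^{N+1}\pmod{2^{N+3}}$ for $N\ge 4$ is proved by a short induction on $N$ using part (i) again, with $N=3$ checked by hand. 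You would need either to adopt that reduction or to redo your block analysis with the correct $+1$ Wilson sign and explicit dependence on $k\bmod 8$.
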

\begin{proof}
(i). First we have
\begin{align}\label{equantion 1}
\frac{(k2^N+2a-3)!!}{(k2^N+1)!!}&=(k2^N+3)\cdot(k2^N+5)\ldots (k2^N+2a-3)\\
&=(2a-3)!!+k2^N\cdot\sum_{j\in S}\frac{(2a-3)!!}{j}+x\cdot
(k2^N)^2\nonumber\\
&\equiv(2a-3)!!+k2^N\cdot\sum_{j\in
S}\frac{(2a-3)!!}{j}\pmod{2^{2N}},\nonumber
\end{align}
where $x\in \mathbf{N}$ and $S=\{3,5,\ldots ,2a-3\}$. If $j\in S$,
then $2a-j\in S$. Clearly $j\neq 2a-j$ except that $j=a\in S$. If
$2|a$, then $a\not\in S$. Therefore we have
\begin{align*}
\sum_{j\in S}\frac{(2a-3)!!}{j}&=\sum_{j\in
S,j<a}\bigg(\frac{(2a-3)!!}{2a-j}+\frac{(2a-3)!!}{j}\bigg)\\
&=2a\sum_{j\in S,j<a}\frac{(2a-3)!!}{j(2a-j)}.\nonumber
\end{align*}
Then $v(k2^N\cdot\sum_{j\in S}\frac{(2a-3)!!}{j})\geq N+1+v(a)$.
Hence by (\ref{equantion 1}),  Lemma \ref{lemma 1} (i) is true for
the case $2|a$.

Now we consider the case  $2\nmid a$, then $a\in S$. Hence
\begin{align*}
\sum_{j\in S}\frac{(2a-3)!!}{j}&=\sum_{j\in
S,j<a}\bigg(\frac{(2a-3)!!}{2a-j}+\frac{(2a-3)!!}{j}\bigg)
+\frac{(2a-3)!!}{a}\\
&=2a\sum_{j\in
S,j<a}\frac{(2a-3)!!}{j(2a-j)}+\frac{(2a-3)!!}{a}\nonumber.
\end{align*}
Then $v(\sum_{j\in S}\frac{(2a-3)!!}{j})=0$ since
$v(\frac{(2a-3)!!}{a})=0$. Thus by (\ref{equantion 1}) we infer
that  Lemma \ref{lemma 1} (i) holds for the case $2\nmid a$.
Part (i) is proved.

(ii). Since $N\ge 3$, $2N\ge N+1$. Then by Lemma \ref{AHR Lem
4.1} (iii)
\begin{align*}(k\cdot 2^N+1)!!&=(k\cdot
2^N-3)!!\cdot(k\cdot 2^N-1)\cdot (k\cdot 2^N+1)\\
&\equiv -(k\cdot 2^N-3)!!\\
&\equiv1\pmod{2^{N+1}}.
\end{align*}
So the desired result follows immediately from part (i).

(iii).  If $k\equiv1\pmod 4$, then by part (ii)
\begin{align}\label{001}
(k2^N-3)!!=(t_1\cdot2^{N+2}+2^N-3)!!\equiv(2^N-3)!!\pmod{2^{N+3}},
\end{align}
where $t_1\ge 0$ is an integer. If $k\equiv3\pmod 4$, then by part
(ii)
\begin{align}\label{002}
(k2^N-3)!!=(t_2\cdot2^{N+2}+3\cdot2^N-3)!!\equiv(3\cdot2^N-3)!!\pmod{2^{N+3}},
\end{align}
where $t_2\ge 0$ is an integer. By part (i) we have
\begin{align}\label{003}
(3\cdot2^N-3)!!&\equiv\frac{(2^{N+1}+2^N-3)!!}{(2^{N+1}+1)!!}
\cdot(-1)\cdot\frac{(2^{N}+2^N-3)!!}{(2^{N}+1)!!}\cdot(-1)\cdot(2^{N}-3)!!\\
&\equiv((2^{N}-3)!!)^3\pmod{2^{2N}}.\nonumber
\end{align}
Since $N\ge 3$, $2N\ge N+3$. Then by (\ref{002}) and (\ref{003}) we
get
\begin{align}\label{004}
(k2^N-3)!!\equiv((2^N-3)!!)^3\pmod{2^{N+3}}.
\end{align}

First consider the case $N=3$. Clearly $(2^3-3)!!\equiv15\pmod
{2^6}$. Hence by (\ref{001}) we have $ (k\cdot2^3-3)!!\equiv
15=-1+(-1)^{\frac{k-1}{2}}\cdot 2^{4}\pmod{2^{6}}$ if $k\equiv 1
\pmod 4$. By (\ref{004}), $(k\cdot2^3-3)!!\equiv15^3\equiv -17=
-1+(-1)^{\frac{k-1}{2}}\cdot 2^{4}\pmod{2^{6}}$ if $k\equiv 3
\pmod 4$. Part (iii) is proved for the $N=3$ case. In what follows
we deal with the $N\ge 4$ case.

We claim that $(2^{N}-3)!!\equiv-1-2^{N+1} \pmod {2^{N+3}}$ for
$N\ge4$. We use induction on $N$. Evidently
$(2^4-3)!!\equiv-1-2^5\pmod {2^7}$ if $N=4$. Assume that
$(2^N-3)!!\equiv-1-2^{N+1}\pmod {2^{N+3}}$ for some $N\ge 4$. Since
$N\ge4$, $2N\geq N+4$. Then by part (i) and the induction hypothesis, we
get
\begin{align*}
(2^{N+1}-3)!!&\equiv\frac{(2^{N}+2^N-3)!!}{(2^{N}+1)!!}\cdot(-1)\cdot(2^{N}-3)!!\\
&\equiv-((2^N-3)!!)^2\\
&\equiv-1-2^{N+2} \pmod {2^{N+4}}.
\end{align*}
Hence the claim is true.  So the desired result follows
immediately from the claim and (\ref{001}), (\ref{004}). Part
(iii) is proved.
\end{proof}

\begin{lem}\label{lemma 3}
Let $a\ge0$ and $i\ge1$ be integers. Define
$f_a(i,j):=\frac{(a+1)\ldots (a+2i)}{a+j}$ for $j\in\{1,\ldots ,2i\}$.
Then
\begin{align}
v(\sum_{j=1}^{2i}f_a(i,j))\ge \left\{
\begin{array}{ll}
i-1, &{ if }~1\le i\le 3\\
i, &{\it otherwise}.\\
\end{array}
 \right.
\end{align}
\end{lem}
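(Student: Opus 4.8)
The idea is to set $P:=\prod_{t=1}^{2i}(a+t)$, so that $\sum_{j=1}^{2i}f_a(i,j)=\sum_{j=1}^{2i}P/(a+j)$, and to split this sum according to the parity of $a+j$. Among the $2i$ consecutive integers $a+1,\dots,a+2i$ exactly $i$ are even; writing them as $2b,2b+2,\dots,2b+2(i-1)$ with $b\ge 1$, we obtain $P=2^{i}\bigl(\prod_{l=0}^{i-1}(b+l)\bigr)R$ with $R$ odd, whence $v(P)=i+v\bigl(\prod_{l=0}^{i-1}(b+l)\bigr)\ge i$. For every $j$ with $a+j$ odd one then has $v(f_a(i,j))=v(P)\ge i$, so the partial sum $\sum_{a+j\ \mathrm{odd}}f_a(i,j)$ has valuation $\ge i$.

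For the even-indexed part, I would extract the power of $2$ explicitly: $\sum_{a+j\ \mathrm{even}}f_a(i,j)=\sum_{k=0}^{i-1}P/(2b+2k)=2^{i-1}R\,T_i(b)$, where $T_i(b):=\sum_{k=0}^{i-1}\prod_{0\le l\le i-1,\ l\ne k}(b+l)$ is an integer (equivalently $T_i(b)=\frac{d}{dx}\bigl[x(x+1)\cdots(x+i-1)\bigr]\big|_{x=b}$, though this is not needed). Hence this partial sum has valuation $\ge i-1$ unconditionally, and combining with the odd-indexed part gives $v\bigl(\sum_{j=1}^{2i}f_a(i,j)\bigr)\ge i-1$ for every $i\ge 1$. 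This already settles the cases $1\le i\le 3$, and one should not expect a better bound there, since $T_i(b)$ can be odd (for instance $T_3(1)=2\cdot3+1\cdot3+1\cdot2=11$).

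To handle $i\ge 4$ the plan is to show $T_i(b)$ is even, which upgrades the even-indexed part to valuation $\ge i$ and finishes the proof. Among the $i$ consecutive integers $b,b+1,\dots,b+i-1$ at least $\lfloor i/2\rfloor\ge 2$ are even, so for every $k\in\{0,\dots,i-1\}$ the product $\prod_{0\le l\le i-1,\ l\ne k}(b+l)$ still contains an even factor; hence every summand of $T_i(b)$ is even, so $T_i(b)$ is even. Therefore $v\bigl(\sum_{a+j\ \mathrm{even}}f_a(i,j)\bigr)=i-1+v(T_i(b))\ge i$, and together with the bound $\ge i$ for the odd-indexed part this yields $v\bigl(\sum_{j=1}^{2i}f_a(i,j)\bigr)\ge i$.

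The argument is essentially bookkeeping once the parity split is made; the only points that require care are extracting the factor $2^{i-1}$ cleanly from the even-indexed terms and the elementary counting of even integers in a block of $i$ consecutive integers. I do not foresee a genuine obstacle, and the observation that the bound $i-1$ is sharp for $i\le 3$ means no separate case analysis is needed in that range.
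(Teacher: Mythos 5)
Your proof is correct. For the main case $i\ge 4$ it is essentially the paper's argument in different clothing: the paper writes $a=2b+b'$ and shows directly that $v(f_a(i,j))\ge i$ for every $j$, the key point being that among $i\ge4$ consecutive integers at least two are even, so deleting one factor from their product still leaves an even factor --- exactly the observation you use to show each summand of $T_i(b)$ is even. Where you genuinely diverge is the range $1\le i\le 3$: the paper disposes of these by separate explicit computations ($2a+3$ odd for $i=1$, a congruence mod $4$ for $i=2$, and a divisibility check for $i=3$), whereas your parity split gives the bound $i-1$ uniformly for all $i$ from the identity $\sum_{a+j\ \mathrm{even}}f_a(i,j)=2^{i-1}R\,T_i(b)$ with $T_i(b)\in\mathbf{Z}$, plus the termwise bound $v(f_a(i,j))\ge i$ for $a+j$ odd. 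That buys you a cleaner, case-free treatment of the small indices and, via the example $T_3(1)=11$, an explanation of why the weaker bound $i-1$ is unavoidable there; the paper's computations buy slightly sharper information (e.g.\ $v=1$ exactly when $i=2$), which it does not need. Both arguments are sound.
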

\begin{proof}
First we have $v(f_a(1,1)+f_a(1,2))=v(2a+3)=0$. So Lemma \ref{lemma
3} is true for $i=1$. For $i=2$, we can easily check that
$\sum_{j=1}^4f_a(2,j)\equiv2a(a+1)+2\equiv2\pmod 4$. Hence
$v(\sum_{j=1}^4f_a(2,j))=1$ as required. Lemma \ref{lemma 3} is true
when $i=2$. For $i=3$, by the fact that $8$ divides the product of
any four consecutive integers, we get $8|f_a(3,j)$ for $j=1,2,5,6$.
On the other hand, we have  $f_a(3,3)\equiv a^3(a+1)^2\equiv 0\pmod
4$ and $f_a(3,4)\equiv a^2(a+1)^2(a+3)\equiv 0\pmod 4$. It follows
that
$$v(\sum_{j=1}^6f_a(3,j))\ge \min_{1\le j\le 6} v(f_a(3,j))\ge
2$$
as required.

Now let $i\ge4$. We may let $a=2b+b'$, where $b\in \mathbf{Z}_{\ge
0}$ and $b'=0$ or $1$. Then we get
\begin{align}\label{equantion 4}
v(f_a(i,j))=v\Big(\frac{(2b+2)\ldots (2b+2i)}{2b+j+b'}\Big)
=i+v\Big(\frac{(b+1)\ldots (b+i)}{2b+j+b'}\Big).
\end{align}
If $2\nmid (b'+j)$, we have
$v\Big(\frac{(b+1)\ldots (b+i)}{2b+j+b'}\Big)\ge 3.$ If $2|(b'+j)$,
then $b+(j+b')/2\in\{b+1,\ldots , b+i\}$. Thus we have
$v\Big(\frac{(b+1)\ldots (b+i)}{2(b+(j+b')/2)}\Big)\ge 0$ since
$i\ge4$. Hence by (\ref{equantion 4}), we have $v(f_a(i,j))\ge i$
for all $1\le j\le 2i$. Therefore
$$v(\sum_{j=1}^{2i}f_a(i,j))\ge \min_{1\le j\le
2i}(v(f_a(i,j)))\ge i.$$ The proof of Lemma \ref{lemma 3} is
complete.
\end{proof}

\noindent{\bf Remark.}
 In fact, we infer that
$v\Big(\frac{(b+1)\ldots (b+i)}{2b+j+b'}\Big)\ge 1$ for $i\ge6$ and
$v\Big(\frac{(b+1)\ldots (b+i)}{2b+j+b'}\Big)\ge 3$ for $i\ge8$. Then
$v(f_a(i,j))\ge i+1$ for $i\ge6$  and $v(f_a(i,j))\ge i+3$ for
$i\ge8$ by (\ref{equantion 4}).

\begin{lem}\label{lemma 4}
Let $q,r,a\in \mathbf{N}$, $e\in \mathbf{Z}^+$ and $v(l)=N$.  Define
$\delta_r=l$ for $r=1,2$ and $\delta_r=0$ for $r\neq1,2$. Then
\begin{enumerate}
\item[(i)] $\frac{(l+q+2r)!}{(l+q)!r!}\equiv\frac{(q+2r)!}{q!r!}+\delta_r\pmod
{2^{N+1}}$.
\item[(ii)] If $a=0$ or $1$, $\frac{(2(l+q+2r)+a)!}{2^{l+q+2r}(l+q)!r!}\equiv
\frac{(2(q+2r)+a)!}{2^{q+2r}q!r!}+\delta_r\pmod {2^{N+1}}$.
\item[(iii)] If $a\ge 2e$,  $\frac{(2(l+q+2r)+a)!}{2^{l+q+2r}(l+q)!r!}\equiv
\frac{(2(q+2r)+a)!}{2^{q+2r}q!r!}\pmod {2^{N+e}}$.

\item[(iv)] If $a\ge 2(e+1)$,
$\frac{(2(l+q+2r)+a)!}{2^{l+q+2r+e}(l+q)!r!}\equiv
\frac{(2(q+2r)+a)!}{2^{q+2r+e}q!r!}\pmod {2^{N+1}}$.
\end{enumerate}
\end{lem}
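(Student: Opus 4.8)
The four parts are the $2$-adic analogues of Lemma~\ref{A3 lemma 2.4} (Adelberg's odd-prime factorial congruences), with the divided binomial coefficient $\frac{(q+2r)!}{q!r!}$ playing the role of $(qp+a)!/(q!p^q)$. The unifying idea is to write each left-hand side as a product of the corresponding right-hand side with a ``correction factor'' built from $l$ and the extra terms $l+q+2r,\dots$, and then show that the correction factor is $\equiv 1$ (or $\equiv 1 + (\text{small})$) modulo the stated power of $2$ using Lemma~\ref{lemma 3} (the bound on $v(\sum_j f_a(i,j))$) together with $v(l)=N$.

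First I would prove (i), since (ii)--(iv) will reduce to it. Write
\begin{align*}
\frac{(l+q+2r)!}{(l+q)!\,r!}
&=\frac{(q+2r)!}{q!\,r!}\cdot\frac{(l+q+1)(l+q+2)\cdots(l+q+2r)}{(q+1)(q+2)\cdots(q+2r)}.
\end{align*}
Hmm — this is not quite a clean product, so instead I would expand $(l+q+1)\cdots(l+q+2r)$ directly in powers of $l$: with $a:=q$ and $i:=r$ in the notation of Lemma~\ref{lemma 3},
$$(l+q+1)\cdots(l+q+2r)=(q+2r)!/q! \;+\; l\sum_{j=1}^{2r} f_q(r,j)\;+\;l^2(\cdots).$$
Dividing by $r!$ and using $v(l)=N$, the $l^2$ term contributes valuation $\ge 2N\ge N+1$; the linear term contributes $N+v\big(\sum_j f_q(r,j)/r!\big)$. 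By Lemma~\ref{lemma 3}, $v(\sum_j f_q(r,j))\ge r-1\ge 1$ for $r\le 3$ and $\ge r$ for $r\ge 4$; in every case $r\ge 2$ this is $\ge 1$ after dividing by $r!$ one still needs $v(\sum_j f_q(r,j)) - v(r!)\ge 0$, which Lemma~\ref{lemma 3}'s bounds were evidently designed to give (this is exactly why the bound jumps to $i$ for $i\ge 4$). So for $r\ge 2$ the linear term already has valuation $\ge N+1$ and the congruence holds with $\delta_r=0$. For $r=1$: $\sum_{j=1}^2 f_q(1,j)=2q+3$, which is odd, so the linear term is exactly $l\cdot(\text{unit})$, giving the $+\delta_1=+l$ correction modulo $2^{N+1}$ (after checking the unit is $\equiv 1 \bmod 2$). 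For $r=2$: $\sum f_q(2,j)\equiv 2\bmod 4$, so dividing by $r!=2$ gives a unit, and again the linear term is $l\cdot(\text{unit})\equiv l\bmod 2^{N+1}$, yielding $\delta_2=l$; I should double-check the unit is $\equiv1\bmod2$ so the correction is exactly $+l$. The case $r=3$ needs $v(\sum f_q(3,j))\ge 2 = v(3!)+v(2)$... here $v(3!)=1$, so the linear term has valuation $\ge N+2-1=N+1$, giving $\delta_3=0$, consistent.

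Then (ii) follows from (i) by the substitution $q\mapsto 2q+a$, $r$ unchanged, but with $l$ replaced by $2l$ in the ``base'': more precisely $\frac{(2(l+q+2r)+a)!}{2^{l+q+2r}(l+q)!r!}=\frac{((2l+2q+a)+2(l+2r))!}{2^{l+q+2r}(l+q)!r!}$, and one massages this into the shape of (i) with an extra factor $2^{-l}$ and the Legendre formula $v\big((2M)!/2^M M!\big)=v(M!)$ absorbing the powers of $2$ cleanly when $a\in\{0,1\}$; the $\delta_r$ passes through because the $r=1,2$ exceptional linear terms survive. For (iii), when $a\ge 2e$ one has $2e$ extra ``even-indexed'' factors among $2(l+q+2r)+1,\dots,2(l+q+2r)+a$ compared to $2(q+2r)+1,\dots$, which soak up $e$ extra factors of $2$, improving the modulus from $2^{N+1}$ to $2^{N+e}$ at the cost of losing the $\delta_r$ term (since now even the $r=1,2$ linear contributions are outweighed — or rather, one argues directly on valuations as in Lemma~\ref{A3 lemma 2.4}(iii)). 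Part (iv) is (iii) with $a\ge 2(e+1)$ giving one more factor of $2$ to spare, which restores the modulus to $2^{N+1}$ after dividing by the extra $2^e$; this mirrors exactly the passage from (iii) to (iv) in Lemma~\ref{A3 lemma 2.4}.

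The main obstacle is bookkeeping the $2$-adic valuations in parts (ii)--(iv): one must track powers of $2$ coming from three sources simultaneously — the explicit $2^{l+q+2r}$ (resp.\ $2^{l+q+2r+e}$) in the denominator, the factor $(2M)!/M!$ which contributes $M$ factors of $2$ by Legendre, and the ``extra'' even factors $2(q+2r)+2, 2(q+2r)+4,\dots$ in the numerator when $a$ is large — and verify they balance to leave precisely the claimed modulus. I expect the cleanest route is to reduce everything to (i) by repeatedly peeling off one pair of consecutive integers $\{2M+2t-1,\,2M+2t\}$ at a time (contributing one unit and one factor $2(M+t)$), so that each part becomes a finite induction whose inductive step is a single application of the $r$-analysis from (i); the $\delta_r$ terms then only appear in (i) and (ii) because the peeling process in (iii),(iv) always has enough slack. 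A secondary technical point: in the $r=1,2$ cases of (i) I must confirm that the relevant unit is $\equiv 1\pmod 2$ (not merely a unit) so that the correction is exactly $+l$ and not $+l\cdot(\text{odd}\neq1)$ — but modulo $2^{N+1}$, $l\cdot(\text{odd}) \equiv l + l\cdot(\text{even}) \equiv l\pmod{2^{N+1}}$ since $v(l)=N$, so this is automatic.
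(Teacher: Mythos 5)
Your overall strategy is the same as the paper's: expand $(l+q+1)\cdots(l+q+2r)$ in powers of $l$, control the linear term via Lemma \ref{lemma 3}, and bootstrap (ii)--(iv) from (i). But two steps as written would fail. First, in (i) you dismiss the higher-order terms with ``the $l^2$ term contributes valuation $\ge 2N\ge N+1$'' --- after dividing by $r!$ this is not justified, since $v(r!)$ can exceed $N-1$ once $r>N$ and then swallows the margin $2N-(N+1)$. The paper repairs this by first disposing of the case $r\ge N+1$, where both sides are already $\equiv 0\pmod{2^{N+1}}$ because $v\bigl((l+q+2r)!/((l+q)!\,r!)\bigr)\ge v((2r)!)-v(r!)=r\ge N+1$ (and likewise for the right side), and only then expanding for $r\le N$, where $v(r!)\le r-1\le N-1$ keeps the quadratic and higher terms above $2^{N+1}$. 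You need this (or an equivalent) case split.

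Second, part (ii) does not follow from (i) by a substitution $q\mapsto 2q+a$: writing $\frac{(2(l+q+2r))!}{2^{l+q+2r}(l+q)!\,r!}=\frac{(l+q+2r)!}{(l+q)!\,r!}\,\bigl(2(l+q+2r)-1\bigr)!!$, the first factor is handled by (i), but you still must show that the odd part satisfies $\bigl(2(l+q+2r)-1\bigr)!!\equiv\bigl(2(q+2r)-1\bigr)!!\pmod{2^{N+1}}$, i.e.\ that shifting the argument of the double factorial by $2l$ (with $v(2l)=N+1$) does not change it modulo $2^{N+1}$. That is precisely Lemma \ref{lemma 1}(ii) and is a genuine extra input, not bookkeeping; relatedly, your ``Legendre formula'' $v\bigl((2M)!/2^MM!\bigr)=v(M!)$ is misstated, since $(2M)!/(2^MM!)=(2M-1)!!$ is odd (you want $v((2M)!)=M+v(M!)$). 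Once these two points are fixed, your outline of (iii) (peel off the $e$ even factors to extract $2^e$ and apply (ii) modulo $2^N$, which kills $\delta_r$) and of (iv) (apply (iii) with $e+1$ and divide by $2^e$) coincides with the paper's proof.
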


\begin{proof}
(i). For $r=0$, the congruence is trivial. Now let $r>0$. If $r\ge
N+1$, we have
$$v\bigg(\frac{(l+q+2r)!}{(l+q)!r!}\bigg)\ge v((2r)!)-v(r!)=r\ge N+1$$
since $v((l+q+2r)!)\ge v((l+q)!)+v((2r)!)$ and $v((2r)!)=r+v(r!)$.
Similarly we have $v(\frac{(q+2r)!}{q!r!})\ge N+1$. Then
$\frac{(l+q+2r)!}{(l+q)!r!}\equiv\frac{(q+2r)!}{q!r!}\equiv 0\pmod
{2^{N+1}}$ as required. If $r\le N$, then $v(r!)\le r-1\le N-1$. On
the other hand, we have
\begin{align}\label{005}
\frac{(l+q+2r)!}{(l+q)!r!}&=\frac{(l+q+1)\ldots (l+q+2r)}{r!}\\
&\equiv\frac{(q+1)\ldots (q+2r)}{r!}+\frac{l}{r!}\sum_{j=1}^{2r}\frac{(q+1)\ldots (q+2r)}{q+j}\nonumber\\
&\equiv\frac{(q+2r)!}{q!r!}+\frac{l}{r!}\sum_{j=1}^{2r}f_q(r,j)\nonumber\pmod
{2^{N+1}},
\end{align}
where $f_a(i,j)$ is defined as in Lemma \ref{lemma 3}. From
(\ref{005}) and Lemma \ref{lemma 3} we deduce that if $r=1,2$, then
\begin{align*}
\frac{(l+q+2r)!}{(l+q)!r!}\equiv\frac{(q+2r)!}{q!r!}+l\pmod
{2^{N+1}},
\end{align*}
and if $r\ge 3$, then
\begin{align*}
\frac{(l+q+2r)!}{(l+q)!r!}\equiv\frac{(q+2r)!}{q!r!}\pmod {2^{N+1}}.
\end{align*}
So part (i) is proved.

(ii). Since $v(2l)=N+1$, we have
\begin{align}\label{equantion 7}
(2l+2q+4r\pm1)!!\equiv(2q+4r\pm1)!!\pmod {2^{N+1}}
\end{align}
by Lemma \ref{lemma 1} (ii). Then multiplying congruence (i) by
(\ref{equantion 7}) and noting that $(2q+4r\pm1)!!\equiv 1\pmod 2$
the desired result follows.

(iii). To deduce part (iii), let $S=\{2,4,\ldots ,2e\}$. Then
\begin{align*}
\prod_{x=1}^a(2(l+q+2r)+x)&=\prod_{i=1}^e2(l+q+2r+i)\cdot
\prod_{x\not\in S,x=1}^a (2(l+q+2r)+x)\\
&=2^e\prod_{i=1}^e(l+q+2r+i)\cdot \prod_{x\not\in S,x=1}^a (2l+2q+4r+x)
\end{align*}
Thus, using congruence (ii) $\pmod {2^{N}}$ for $a=0$, we get
\begin{align*}
&\frac{(2(l+q+2r)+a)!}{2^{l+q+2r}(l+q)!r!}
=\bigg(\prod_{x=1}^a(2(l+q+2r)+x)\bigg)\frac{(2(l+q+2r))!}{2^{l+q+2r}(l+q)!r!}\\
&\equiv 2^e\bigg(\prod_{i=1}^e(q+2r+i)\bigg)\bigg(\prod_{x\not \in
S,x=1}^a (2q+4r+x)\bigg)\frac{(2(q+2r))!}{2^{q+2r}q!r!}\\
&\equiv \frac{(2(2+2r)+a)!}{2^{q+2r}q!r!}\pmod {2^{N+e}}.
\end{align*}

(iv). To deduce part (iv), use the congruence (iii) with $e$
replaced by $e+1$ and then divide by $2^e$. Lemma 4.4 is proved.
\end{proof}

\begin{lem}\label{lemma add}
Let $n=l+m$, $N\ge 3$ and $l=k2^{N}$, $k\in \mathbf{Z}^+, 2\nmid k$.
Let $g(a):=(-1)^{a-1}\frac{(2a-3)!!}{2a}, $ and $a\in
\mathbf{Z}^+$. Then
\begin{align*}
g(n)\equiv g(m)+\left\{\begin{array}{lll}
(-1)^{\frac{m-1}{2}}\frac{l}{2}&\pmod {2^{N+1}},& if~2\nmid m,\\
l+\frac{l}{2mn}&\pmod {2^{N+1}},& if~2|m,4\nmid m,\\
l-\frac{l}{2mn}&\pmod {2^{N+1}},& if~4|m, 8\nmid m.
\end{array} \right.
\end{align*}
\end{lem}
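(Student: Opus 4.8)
\noindent{\it Proof (outline).}
Since $l=k2^{N}$ is even, $n$ and $m$ have the same parity, so $(-1)^{n-1}=(-1)^{m-1}$; and in each of the three cases $v(m)\in\{0,1,2\}$ whereas $v(l)=N\ge3$, so $v(n)=v(m)$. The plan is to start from
$$g(n)-g(m)=(-1)^{m-1}\,\frac{m(2n-3)!!-n(2m-3)!!}{2mn},$$
write $n=l+m$, and split the numerator as
$$m(2n-3)!!-n(2m-3)!!=m\big[(2n-3)!!-(2m-3)!!\big]-l\,(2m-3)!!.$$
Since $v(2mn)=1+2v(m)$, computing this numerator modulo $2^{N+2+2v(m)}$ (i.e.\ modulo $2^{N+2},2^{N+4},2^{N+6}$ in the three cases, in the order $2\nmid m$, $2\,\|\,m$, $4\,\|\,m$) will pin down $g(n)-g(m)$ modulo $2^{N+1}$, and I would handle the two summands separately.

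For the term $l\,(2m-3)!!=k2^{N}(2m-3)!!$ one needs $(2m-3)!!$ only modulo $2^{2+2v(m)}$. If $m$ is odd, $(2m-3)!!\equiv(-1)^{(m-1)/2}\pmod4$ follows from $(2m-1)!!=(2m-3)!!(2m-1)$, Lemma~\ref{AHR Lem 4.1}(i) and $2m-1\equiv1\pmod4$. If $2\,\|\,m$, writing $m=2m'$ with $m'$ odd, Lemma~\ref{AHR Lem 4.1}(ii) gives $(2m-3)!!=(4m'-3)!!\equiv(-1)^{m'-1}=1\pmod{16}$. If $4\,\|\,m$, writing $m=4m'$ with $m'$ odd, Lemma~\ref{lemma 1}(iii) with $N=3$ gives $(2m-3)!!=(m'2^{3}-3)!!\equiv-1+(-1)^{(m'-1)/2}\cdot16\pmod{64}$.

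For the difference term the key observation is $2n-3=k2^{N+1}+(2m-3)$, which brings Lemma~\ref{lemma 1} into play with parameter $N$ replaced by $N+1$ and with $a=m$. If $m$ is odd, Lemma~\ref{lemma 1}(ii) gives directly $(2n-3)!!-(2m-3)!!\equiv k2^{N+1}\pmod{2^{N+2}}$. If $m$ is even, Lemma~\ref{lemma 1}(i) gives $(2n-3)!!\equiv(k2^{N+1}+1)!!\,(2m-3)!!\pmod{2^{N+2+v(m)}}$ (using $v(m)<N$), and I would then evaluate
$$(k2^{N+1}+1)!!=(k2^{N+1}-3)!!\,(k2^{N+1}-1)(k2^{N+1}+1)\equiv1-(-1)^{(k+1)/2}2^{N+2}\pmod{2^{N+4}}$$
using Lemma~\ref{lemma 1}(iii) (its $N\ge4$ branch) for $(k2^{N+1}-3)!!$ and $(k2^{N+1}-1)(k2^{N+1}+1)=k^{2}2^{2N+2}-1\equiv-1\pmod{2^{N+4}}$. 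Together with the residue of $(2m-3)!!$ modulo $4$ from the previous step this fixes $(2n-3)!!-(2m-3)!!$ to the required precision, and multiplying by $m$ gives the contribution of this summand.

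Substituting these into the displayed formula and carrying out the division by $2mn$, the claimed identity reduces in each of the three cases to an elementary congruence modulo $4$, which I would close using $n\equiv m\pmod{2^{N}}$ (hence $n\equiv m\pmod8$, since $N\ge3$), $m^{2}\equiv1\pmod8$ for odd $m$, and the identities $(-1)^{(k+1)/2}\equiv-k\pmod4$ and $(-1)^{(m'-1)/2}\equiv m'\pmod4$. The hard part is the $2$-adic bookkeeping in the even cases: since $v(2mn)$ grows to $3$ and then $5$, the coarse congruence of Lemma~\ref{lemma 1}(ii) is no longer enough, and one must carry the second-order term $-(-1)^{(k+1)/2}2^{N+2}$ of $(k2^{N+1}+1)!!$ along with the residues of $(2m-3)!!$ modulo $16$ and $64$, so the sharper parts~(i) and~(iii) of Lemma~\ref{lemma 1} are essential. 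The degenerate value $m=1$, for which Lemma~\ref{lemma 1}(ii) does not apply to $(2n-3)!!=(k2^{N+1}-1)!!$, I would dispose of by a short direct computation from Lemma~\ref{lemma 1}(iii).
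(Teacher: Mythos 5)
Your outline is correct and follows essentially the same route as the paper: both arguments rest on Lemma \ref{lemma 1}(ii) together with Lemma \ref{AHR Lem 4.1}(i) for odd $m$, and on the multiplicative factorization of $(2n-3)!!$ via Lemma \ref{lemma 1}(i) combined with the sharper evaluations of Lemma \ref{AHR Lem 4.1}(ii) and Lemma \ref{lemma 1}(iii) for the two even cases. The only difference is cosmetic — you clear the denominator $2mn$ and track the numerator modulo $2^{N+2+2v(m)}$, whereas the paper manipulates the two fractions directly — plus your (reasonable) extra care about the degenerate value $m=1$, which never occurs in the paper's application.
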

\begin{proof} If $2\nmid m$, then applying Lemma
\ref{lemma 1} (ii), we obtain
$(2n-3)!!=(2l+2m-3)!!\equiv(2m-3)!!+2l\pmod{2^{N+2}}$. By Lemma
\ref{AHR Lem 4.1} (i), $(2m-3)!!=(2m-3)(2(m-2)-1)!!\equiv
(-1)^{\frac{m-1}{2}}\pmod4$. Since $2\nmid m$ and $n=l+m$,
$\frac{1}{mn}=\frac{1}{m^2+ml}\in 1+4\mathbf{Z}_2$.  Thus
\begin{align*}
g(n)-g(m)&\equiv(2m-3)!!(-\frac{l}{2mn})+\frac{l}{n}\\
&\equiv(-1)^{\frac{m-1}{2}}(-\frac{l}{2})+l\\
&\equiv(-1)^{\frac{m-1}{2}}\frac{l}{2}\pmod{2^{N+1}}.
\end{align*}

If $2|m$, then by Lemma \ref{lemma 1} (i),
$$(2n-3)!!=(2l+2m-3)!!\equiv-(2l-3)!!(2m-3)!!\pmod{2^{N+2+\min\{v(m),N\}}}$$
Thus for $v(m)<N$, we have
\begin{align}\label{equantion 100}
g(n)-g(m)\equiv
\bigg(\frac{(2l-3)!!}{2(l+m)}+\frac{1}{2m}\bigg)(2m-3)!!\pmod
{2^{N+1}}.
\end{align}

If $2|m$ and $4\nmid m$, then $(2m-3)!!\equiv 1\pmod {16}$ by Lemma
\ref{AHR Lem 4.1}(ii). Thus applying (\ref{equantion 100} ) and
Lemma \ref{lemma 1} (iii) gives us
\begin{align}\label{equantion 200}
g(n)-g(m)&\equiv\bigg(\frac{-1+2^{N+2}}{2(l+m)}+\frac{1}{2m}\bigg)(2m-3)!!
\nonumber\\
&\equiv\bigg(\frac{2^N}{\frac{l}{2}+\frac{m}{2}}+\frac{l}{2}
\cdot\frac{1}{m(m+l)}\bigg)\pmod{2^{N+1}}.
\end{align}
Since $\frac{l}{2}+\frac{m}{2}\in  1+2\mathbf{Z}_2$, we have
$g(n)-g(m)\equiv l+\frac{l}{2mn} \pmod {2^{N+1}}$ by (\ref{equantion
200}).

If $4|m$ and $8\nmid m$, then we may assume $m=4\lambda$ with
$2\nmid \lambda$. Hence by Lemma \ref{lemma 1} (iii),
\begin{align}\label{equantion 300}
(2m-3)!!\equiv -1+(-1)^{\frac{\lambda-1}{2}}\cdot2^4\equiv-1+4m\pmod
{2^6}.
\end{align}
Also by Lemma \ref{lemma 1} (iii) and noting that $N\ge3$, we have
\begin{align}\label{equantion 400}
(2l-3)!!=(k\cdot2^{N+1}-3)!!\equiv -1+(-1)^{\frac{k+1}{2}}\cdot
2^{N+2}\pmod {2^{N+4}}.
\end{align}
Thus using (\ref{equantion 100}), (\ref{equantion 300}) and
(\ref{equantion 400}), we get
\begin{align*}
g(n)-g(m)&\equiv\bigg(\frac{-1+(-1)^{\frac{k+1}{2}}\cdot
2^{N+2}}{2(l+m)}+\frac{1}{2m}\bigg)(2m-3)!!\\
&\equiv\bigg(\frac{(-1)^{\frac{k+1}{2}}2^{N+1}}{l+m}+\frac{l}{2m(l+m)}\bigg)(-1+4m)
\nonumber\\
&\equiv \frac{(-1)^{\frac{k-1}{2}}2^{N+1}+2l}{l+m}-\frac{l}{2mn}\\
&\equiv l-\frac{l}{2mn} \pmod{2^{N+1}}.
\end{align*}
The proof of Lemma \ref{lemma add} is complete.
\end{proof}

In the rest of this paper, we always let
$e:=v(\gamma_u)-v((2u_1)!)-(2u_3+v(u_3!))$ and define $\dot {u}$ by:
$\dot {u}_1=\dot {u}_3=0$ and $\dot {u}_i=u_i$ for $i\ne 1, 3$. Let
$\dot{n}=w(\dot {u})$. Then $\dot{n}=n-u_1-3u_3$ and $v(\gamma_{\dot
{u}})=e$.

\begin{lem}\label{thm 1}
Suppose $w(u)=n$.  Then $n+d-2=2(u_1+2u_3+e)+\Gamma$, where

{\rm (i)} $\Gamma=-2$ if $\dot{n}=0$,

{\rm (ii)} $\Gamma=0$ if
$\dot{n}=7\cdot2^\alpha$ and $u_7=2^\alpha$, $\alpha\in \mathbf{N}$,

{\rm (iii)} $\Gamma=1$ if $ \dot{n}=2$,

{\rm (iv)} $\Gamma\ge 2$ otherwise.
\end{lem}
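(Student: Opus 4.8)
The plan is to peel off the parts $i=1$ and $i=3$, reduce the identity to a statement about $\dot u$ alone, and then establish a sharp elementary estimate for the weight $i-1-2v(i+1)$ carried by each admissible part $i$.

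First I would note that $n+d=\sum_{i\ge1}(i+1)u_i$, so isolating the contributions of $i=1$ and $i=3$ gives $n+d=2u_1+4u_3+(\dot n+\dot d)$, where $\dot d:=d(\dot u)$; hence
$$\Gamma:=(n+d-2)-2(u_1+2u_3+e)=\dot n+\dot d-2-2e.$$
Since $e=v(\gamma_{\dot u})=\sum_{i\ne1,3}\big(u_iv(i+1)+v(u_i!)\big)$ and $v(u_i!)=u_i-s(u_i)$ by (\ref{eqn 3}) with $p=2$, this becomes
$$\Gamma+2=\sum_{\substack{i\ne1,3\\ u_i>0}}T_i,\qquad T_i:=u_i\big(i-1-2v(i+1)\big)+2s(u_i),$$
so the lemma reduces to controlling the size of $\sum T_i$.

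The crux is the elementary fact that for every integer $i\ge2$ with $i\ne3$ one has $i-1-2v(i+1)\ge0$, with equality exactly when $i=7$; moreover this quantity equals $1$ exactly when $i=2$, and is $\ge2$ for every remaining $i$ (that is, $i\ge4$ with $i\ne7$). I would prove this by a case split on $k:=v(i+1)$: if $k=0$ then $i$ is even and the value is $i-1\in\{1,3,5,\dots\}$; if $k\ge1$ write $i+1=2^km$ with $m$ odd, so $i=2^km-1$ and the value is $2^km-2-2k$, and one checks that for $m=1$ the small cases $k\le3$ give $i\in\{1,3,7\}$ --- the first two excluded and $i=7$ giving value $0$ --- while $k\ge4$ gives value $\ge6$, and for $m\ge3$ the value is always $\ge2$. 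Substituting into the formula for $T_i$ then yields: every present part satisfies $T_i\ge2$; for $i=7$ one has $T_7=2s(u_7)$, which is $2$ precisely when $u_7$ is a power of $2$ and $\ge4$ otherwise; for $i=2$ one has $T_2=u_2+2s(u_2)$, which is $3$ precisely when $u_2=1$ and $\ge4$ otherwise; and $T_i\ge2u_i+2\ge4$ for every $i\ge4$ with $i\ne7$.

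Finally I would assemble the cases according to the shape of $\dot u$. If $\dot u=0$ (equivalently $\dot n=0$) the sum is empty and $\Gamma=-2$, which is (i). If $\dot u$ has at least two distinct parts then $\Gamma+2=\sum T_i\ge2+2$, so $\Gamma\ge2$ and none of (i)--(iii) can hold, which is (iv). Otherwise $\dot u$ is a single part $i_0\ge2$ with $i_0\ne3$: if $i_0=2$ then $\dot n=2u_2$ and $\Gamma=u_2+2s(u_2)-2$, which is $1$ when $u_2=1$ (so $\dot n=2$, case (iii)) and $\ge2$ otherwise; if $i_0=7$ then $\dot n=7u_7$ and $\Gamma=2s(u_7)-2$, which is $0$ when $u_7=2^\alpha$ (case (ii)) and $\ge2$ otherwise; and if $i_0\ge4$ with $i_0\ne7$ then $\Gamma=T_{i_0}-2\ge2$ (case (iv)). Because the configurations falling under (i), (ii), (iii) correspond exactly to $\dot n=0$, to $\dot n=7\cdot2^\alpha$ with $u_7=2^\alpha$, and to $\dot n=2$, respectively, and these three are pairwise incompatible, every configuration is accounted for. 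I expect the estimate in the third paragraph to be the main obstacle --- pinning down $i=7$ as the unique extra exceptional part beyond the deleted $i=1,3$; the rest is bookkeeping.
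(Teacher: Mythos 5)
Your proof is correct and follows essentially the same route as the paper: peel off the parts $i=1,3$, reduce to a per-part local inequality comparing $(i+1)u_i$ with $2e_i$ via a case split on $v(i+1)$, single out $i=7$ with $u_7$ a power of $2$ through the binary digit sum of $u_i$, and sum over parts. Your reformulation with $T_i=u_i(i-1-2v(i+1))+2s(u_i)$ and the final single-part/multi-part bookkeeping is a slightly cleaner packaging of the paper's claim $(i+1)u_i\ge 2(e_i+1)+2$, but the substance is identical.
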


\begin{proof}If $\dot{n}=0$ i.e., $n=u_1+3u_3$, we have $e=0$ and
$n+d-2=2u_1+4u_3-2$. Then $\Gamma=-2$.

Now we assume that $\dot{n}>0$. Let $\dot{u}_1=\dot{u}_3=0$ and
$\dot{u}_i=u_i \ (i\neq 1,3)$. Then $w(\dot{u})=\dot{n},
v(\gamma_{\dot{u}})=e$ and $n+d-2=2u_1+4u_3+\dot{n}+\dot{d}-2.$ So
replacing $u$ by $\dot{u}$, in what follows we can assume that
$u_1=u_3=0$. Note that $n+d=\sum_{i=1}^n(i+1)u_i$ and
$e=\sum_{i=1}^n e_i$, where
$$e_i:=v(i+1)u_i+v(u_i!).$$ It follows immediately that $e_i>0$ if and only if
either $2|(i+1)$ or $u_i\ge 2$.

First consider case (ii): $\dot{n}=7\cdot2^\alpha$ and
$u_7=2^\alpha$. Then
$e_i=3\cdot2^\alpha+v(2^\alpha!)=4\cdot2^\alpha-1.$ Hence we have
$(i+1)u_i=8u_7=2(e_7+1)$. Since $n+d-2=8u_7-2$ and
$2(u_1+2u_3+e)=2e_7$, we have $\Gamma=0$.

For case (iii): $\dot{n}=2$, $i=2,u_2=1$. Clearly $e_i=0$. Since
$u_1=u_3=e=0$, we have $n+d-2=(i+1)u_i-2=1=2(u_1+2u_3+e)+1$. Hence
$\Gamma=1$.

In what follows we deal with case (iv).  We claim that
\begin{align}\label{equantion 8}(i+1)u_i\ge
2(e_i+1)+2=2(v(i+1)u_i+v(u_i!)+1)+2
\end{align}
for all cases where $i\neq 1,3$ apart from  cases (ii) and (iii).

If  $2\nmid (i+1)$ and $u_i\ge 2,$ then $e_i= v(u_i!)\le u_i-1.$ We
can deduce that
$$(i+1)u_i-2\ge 3u_i-2\ge 2u_i\ge 2(e_i+1).$$

If $2|(i+1)$,  we may let $i+1=\varepsilon 2^t$ for some $t\in
\mathbf{N}$ with $2\nmid \varepsilon$. Then  $e_i=tu_i+v(u_i!)$,
$(i+1)u_i-2=\varepsilon 2^t u_i-2$ and $2(e_i+1)=2(t
u_i+v((u_i)!)+1)\le 2(t+1)u_i.$

For $t\ge 4$, we have $2^t>2(t+2)$. It implies that
$$(i+1)u_i-2\ge 2^t u_i-2 \ge 2(t+1)u_i\ge 2(e_i+1).
$$

For $t= 3$, we have $i+1= 2^3\varepsilon$. If $\varepsilon\ge 3$,
then $(i+1)u_i-2\ge 3\cdot 2^3 u_i-2\ge 2(3+1)u_i\ge2(e_i+1).$ If
$\varepsilon=1$, then $(i+1)u_i-2=2^3u_i-2=8u_i-2$ and
$e_i=3u_i+v(u_i!)=3u_i+u_i-s_2(u_i)$. Therefore $(i+1)u_i-2\ge
2(e_i+1)$ if $s_2(u_i)\ge2.$  But if $s_2(u_i)=1$, we have $i=7,
u_i=2^\alpha,$ which is case (ii).

For $t=1,2$, we have $i+1= 2\varepsilon$ or $2^2\varepsilon$. Since
$i\neq 1,3$, we have $\varepsilon \geq 3$. Then
$$
(i+1)u_i-2\ge 3\cdot 2^t u_i-2\ge 2(t+1)u_i.
$$
Hence the claim (\ref{equantion 8}) holds as specified.

Finally, if $u_i\ge 1$, then $(i+1)u_i\ge 2$, while if $(i+1)u_i\ge
2(e_i+1)$ and $(j+1)u_j\ge 2(e_j+1)$, then
\begin{align*}
(i+1)u_i+(j+1)u_j-2\ge 2(e_i+e_j+1).
\end{align*}
So we have
$n+d-2\ge 2(e+1)$ for case (iv), which follows by adding the
``local" inequalities for each value of $i$ separately.
\end{proof}

\begin{lem}
\label{thm 2} Let $w(u)=n$. Assume that $\dot{n}>0.$ Then
$v(\tau_u)\ge u_3+\lceil \frac{1}{2}\dot{n}\rceil-1$ except for
$\dot{n}=7u_7$ where $v(\tau_u)\ge u_3+\lceil
\frac{1}{2}\dot{n}\rceil-3$.
\end{lem}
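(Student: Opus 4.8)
The plan is to reduce to the case $u_1=u_3=0$ and then bound $v(\tau_u)=v((n+d-2)!)-v(\gamma_u)$ by cutting $(n+d-2)!$ into one block of consecutive integers for each part of $u$. For the reduction, note that $n+d-2=2u_1+4u_3+(\dot n+\dot d-2)$ and $\gamma_u=(2^{u_1}u_1!)(4^{u_3}u_3!)\,\gamma_{\dot u}$. Since binomial coefficients are integers we have $v((a+b)!)\ge v(a!)+v(b!)$; applying this twice, and then using (\ref{eqn 1}) in the forms $v((2u_1)!)=u_1+v(u_1!)$ and $v((4u_3)!)=3u_3+v(u_3!)$, gives
\[
v(\tau_u)\ \ge\ u_3+\bigl(v((\dot n+\dot d-2)!)-e\bigr)=u_3+v(\tau_{\dot u}).
\]
As $(\dot u)_1=(\dot u)_3=0$, $(\dot u)_7=u_7$ and $w(\dot u)=\dot n$, it suffices to prove the lemma when $u_1=u_3=0$; then $\dot u=u$, $\dot n=n$, and the hypothesis $\dot n=7u_7$ says precisely that every part of $u$ equals $7$.

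So assume $u_1=u_3=0$. I would partition $\{1,\dots,n+d\}$ into consecutive blocks, one of length $(i+1)u_i$ for each part $i$ with $u_i\ge1$, placing the block $B_{i_0}$ of a chosen part $i_0$ last, so that $n+d-1,n+d\in B_{i_0}$; I choose $i_0\ne 7$ whenever $u$ has any part $\ne 7$, and $i_0=7$ otherwise. Any block of $t$ consecutive positive integers has $v$-valuation $\ge v(t!)$ (binomial coefficients again), so deleting $n+d-1$ and $n+d$ from $B_{i_0}$ yields $v((n+d-2)!)\ge\sum_{i\ne i_0}v(((i+1)u_i)!)+v(((i_0+1)u_{i_0}-2)!)$, and hence, since $v(\gamma_u)=\sum_i\bigl(u_iv(i+1)+v(u_i!)\bigr)$,
\[
v(\tau_u)\ \ge\ \sum_{i\ne i_0}\bigl(v(((i+1)u_i)!)-u_iv(i+1)-v(u_i!)\bigr)+\bigl(v(((i_0+1)u_{i_0}-2)!)-u_{i_0}v(i_0+1)-v(u_{i_0}!)\bigr).
\]

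Using (\ref{eqn 1}) to pull the powers of $2$ out of each factorial (writing $i+1=2^cm$ with $m$ odd) together with (\ref{eqn 3}), the lemma reduces to three local estimates: (A) $v(((i+1)u_i)!)-u_iv(i+1)-v(u_i!)\ge\lceil iu_i/2\rceil$ for every part $i\ne 1,3$; (B) the same with the right-hand side lowered by $1$ when $i_0\notin\{1,3,7\}$; and (B$'$) for $i_0=7$, where a direct computation gives the left-hand side $=4u_7-3-v(u_7)\ge\lceil 7u_7/2\rceil-3$. The mechanism is the elementary fact that the ``margin'' $i/2-v(i+1)$ is $\ge 1$ for $i\notin\{1,3,7\}$, equals $1/2$ for $i=7$, and is negative for $i\in\{1,3\}$: this margin, counted once per copy of the part $i$, pays for the loss $u_iv(i+1)$ coming from the factor $(i+1)^{u_i}$, for the unit drop in (B) caused by deleting two top elements, and for the rounding in the ceilings; the handful of smallest parts, where the margin is tight, I would check by hand. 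Summing over all parts and using $\lceil x\rceil+\lceil y\rceil\ge\lceil x+y\rceil$ — so $\sum_i\lceil iu_i/2\rceil\ge\lceil n/2\rceil$ — then gives $v(\tau_u)\ge\lceil n/2\rceil-1$ when $u$ has a part $\ne 7$, and $v(\tau_u)\ge\lceil n/2\rceil-3$ when every part equals $7$, which is the assertion.

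The step I expect to be the main obstacle is the borderline part $i=7$. Because $8u_7$ can be divisible by an arbitrarily high power of $2$, deleting the top two elements of $B_7$ may drop the valuation by far more than one, so (B) genuinely fails for $i_0=7$; this is exactly what forces the choice $i_0\ne 7$ whenever possible, singles out ``every part equal to $7$'' (equivalently $\dot n=7u_7$) as the unique exceptional configuration, and makes the separate, weaker bound (B$'$) necessary — it is the one place where $\lceil\frac{1}{2}\dot n\rceil-3$, rather than $\lceil\frac{1}{2}\dot n\rceil-1$, is actually used. One should also confirm in (B) that $i=7$ is the \emph{only} value of $i\notin\{1,3\}$ at which the deletion outweighs the margin, which again reduces to the clean inequality for $i/2-v(i+1)$.
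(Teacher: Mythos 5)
Your proposal is correct and follows essentially the same route as the paper: the same reduction $v(\tau_u)\ge u_3+v(\tau_{\dot u})$ via superadditivity of $v(\cdot\,!)$ and $v((4u_3)!)=3u_3+v(u_3!)$, the same block decomposition of $(n+d-2)!$ with the ``$-2$'' absorbed into a single distinguished part $i_0$ chosen away from $7$ whenever possible, and the same identification of $\dot n=7u_7$ as the sole exceptional configuration with the weaker bound $4u_7-3-v(u_7)\ge\lceil 7u_7/2\rceil-3$. The local estimates you label (A) and (B) are exactly the paper's claims, which it verifies by the same case analysis on $v(i+1)$ (odd $i+1$; $i+1=\varepsilon 2^t$ with $\varepsilon\ge3$; $i+1=2^t$) that you defer to a hand check.
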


\begin{proof}
 Let $\dot{u}_1=\dot{u}_3=0$ and
$\dot{u}_i=u_i(i\neq 1,3)$. Then $w(\dot{u})=\dot{n}$ and
$n+d-2=2u_1+4u_3+\dot{n}+\dot{d}-2$. So we have
\begin{align*}
v(\tau_u)&=v((n+d-2)!)-v(\gamma_u)\\
&=v((2u_1+4u_3+\dot{n}+\dot{d}-2)!)-(u_1+v(u_1!)
+2u_3+v(u_3!))-v(\gamma_{\dot{u}})\\
&\ge
v((4u_3)!)+v((\dot{n}+\dot{d}-2)!)-2u_3-v(u_3!)-v(\gamma_{\dot{u}})\\
&= u_3+v((\dot{n}+\dot{d}-2)!)-v(\gamma_{\dot{u}}).
\end{align*}
Noticing that $v(\gamma_{\dot{u}})=e=\sum e_i$, it suffices to prove
that $v((\dot{n}+\dot{d}-2)!)-\sum_{i=1}^n e_i\ge\lceil
\frac{1}{2}\dot{n}\rceil-1$. We claim that
\begin{align}\label{equantion 9}
v\big((i+1)u_i-2)!\big)-e_i\ge \Big\lceil\frac{1}{2}i
u_i\Big\rceil-1
\end{align}
for $u_i\ge1$ and $i\neq 1,3,7.$

If $2\nmid (i+1)$, then $e_i=v(i+1)u_i+v(u_i!)=v(u_i!)$ and
$\Big\lceil\frac{1}{2}i u_i\Big\rceil=\frac{i}{2} u_i$. Thus we get
$v((i+1)u_i-2)\geq v(u_i!)+v((iu_i-2)!)\ge
v(u_i!)+\frac{i}{2}u_i-1\ge e_i+\Big\lceil\frac{1}{2}i
u_i\Big\rceil-1$, as claimed.

If $2|(i+1)$, we may let $i+1=\delta 2^t$ where $t\in \mathbf{Z}^+$
and $2\nmid \delta$. Then $e_i=tu_i+v(u_i!)$, and
$\frac{i+1}{2}u_i=\delta 2^{t-1}u_i\ge \Big\lceil\frac{1}{2}i
u_i\Big\rceil$. Therefore we gain
\begin{align}\label{equantion 10}
v\big(((i+1)u_i-2)!\big)&=v\big((2(\delta 2^{t-1}u_i-1))!\big)\\
&=\delta 2^{t-1}u_i-1+v((\delta2^{t-1}u_i-1)!)\nonumber\\
&\ge\Big\lceil\frac{1}{2}i
u_i\Big\rceil-1+v((\delta2^{t-1}u_i-1)!).\nonumber
\end{align}
Let $\delta\ge 3$. Then we have $$v((\delta2^{t-1}u_i-1)!)\ge
v((3\cdot2^{t-1}u_i-1)!)\ge v((2tu_i)!)\ge tu_i+v((tu_i)!)\ge e_i
$$ since $3\cdot2^{t-1}u_i-1\ge 2tu_i$ for $t\ge 1$ and $u_i\ge1$.
So the claim holds for this case by (\ref{equantion 10}).

Now let $\delta=1$, i.e., $i+1=2^t$. Since $i\neq 1,3,7$, $t\geq 4$.
Then by Lemma \ref{A3 lemma 2.0} we have
$$v((2^{t-1}u_i-1)!)=3\cdot2^{t-3}u_i-2+v((2^{t-3}u_i-1)!)\ge tu_i+v(u_i!)\ge e_i$$
since $3\cdot2^{t-3}u_i-2\ge tu_i$ and $2^{t-3}u_i-1\ge u_i$ for
$t\ge 4$ and $u_i\ge1$. So the claim holds for this case by
(\ref{equantion 10}).

On the other hand, if $i\neq 1,3,7$, we have
$v\big(((i+1)u_i)!\big)\ge v\big(((i+1)u_i-2)!)\big)+1\ge
e_i+\Big\lceil\frac{1}{2}i u_i\Big\rceil$ by claim (\ref{equantion
9}). If $i=7$, by Lemma \ref{A3 lemma 2.0} we have
$v(8u_i)!)=7u_i+v(u_i!)\ge e_7+\Big\lceil\frac{7}{2}u_7\Big\rceil.$
Then for $i\neq 1,3$,
\begin{align}\label{equantion 11}v((i+1)u_i)!)\ge
e_i+\Big\lceil\frac{1}{2}i u_i\Big\rceil .
\end{align}

Moreover, if $u_j\ge1$ for some $j\neq 7$, then by (\ref{equantion
9}) and (\ref{equantion 11}) we have

\begin{align*}
v\big((\dot{n}+\dot{d}-2)!\big)&=v\big((\sum_{i=1}^n(i+1)u_i-2)!\big)\\
&=v\big((\sum_{i=1,i\neq j}^n(i+1)u_i+(j+1)u_j-2)!)\big)\\
&\ge \sum_{i=1}^n e_i+\Big\lceil\frac{\sum_{i=1}^n iu_i}{2}\Big\rceil-1,
\end{align*}
as required. Finally if $\dot{n}=7u_7$, we can compute $v(\tau_u)\ge
u_3+v((8u_7-2)!)-e_7\ge u_3+4u_7-3-v(u_7).$ We can easily show that
$4u_7-v(u_7)\ge\lceil \frac{7u_7}{2}\rceil=\lceil
\frac{1}{2}\dot{n}\rceil.$ Then $v(\tau_u)\ge u_3+\lceil
\frac{1}{2}\dot{n}\rceil-3.$ Lemma 4.7 is proved.
\end{proof}

\section{Universal Kummer congruences for powers of 2}

We are now in a position to give the universal Kummer congruences
modulo powers of 2 which consists of Theorems \ref{thm 3} and
\ref{thm 4}. This is the second main result of this paper.
We begin with Theorem \ref{thm 3}.

\begin{thm}\label{thm 3}
Let $n\ge 2$ be an even number.

{\rm (i)} If $v(n)=1$, then
\begin{align*}
\frac{\widehat{B}_n}{n}\equiv
-\frac{1}{2n}c_1^{n}+(\frac{n}{2}-1)c_1^{n-3}c_3+\frac{3(n-4)}{4}
c_1^{n-6}c_3^2-c_1^{n-2}c_2+2c_1^{n-5}(c_2c_3+c_1c_4)\\
\pmod{4\mathbf{Z}_2[c_1,\ldots ,c_n]}.
\end{align*}

{\rm (ii)} If $v(n)\geq 2$, then
\begin{align*}
\frac{\widehat{B}_n}{n}\equiv
&(\frac{1}{2n}-2)c_1^{n}-3\frac{n-2}{2}c_1^{n-3}c_3+\frac{n-4}{4}c_1^{n-6}c_3^2
+\frac{n-8}{4}c_1^{n-12}c_3^4\\
&-3c_1^{n-2}c_2+2c_1^{n-4}c_4+4c_1^{n-4}c_2^2+(n-4)c_1^{n-8}c_2c_3(c_3+c_1^{3})
\pmod{8\mathbf{Z}_2[c_1,\ldots ,c_n]}.
\end{align*}
\end{thm}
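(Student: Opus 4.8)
The strategy is to pick out from $\frac{\widehat{B}_n}{n}=\sum_{\omega(u)=n}\tau_uc^u$ precisely those monomials $\tau_uc^u$ with $v(\tau_u)\le1$ in case~(i) (respectively $v(\tau_u)\le2$ in case~(ii)), to evaluate each such $\tau_u$ modulo $4$ (respectively $8$), and to verify that every other term already lies in $4\mathbf{Z}_2[c_1,\dots,c_n]$ (respectively $8\mathbf{Z}_2[c_1,\dots,c_n]$). With $\dot{u}$, $\dot{n}$, $e$ as in the paragraph preceding Lemma~\ref{thm 1}, Lemma~\ref{thm 2} gives $v(\tau_u)\ge u_3+\lceil\tfrac12\dot{n}\rceil-1$ in general and $v(\tau_u)\ge u_3+\lceil\tfrac12\dot{n}\rceil-3$ in the exceptional case $\dot{n}=7u_7$, so a monomial can be relevant only when $u_3$ and $\dot{n}$ are small; together with $n=u_1+3u_3+\dot{n}$ this forces $u_1$ to lie within a bounded distance of $n$ and leaves a short, explicit list of admissible shapes of $u$. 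First I would compile this list: for~(i) the survivors are $u=(n,0,\dots)$, $(n-3,0,1,0,\dots)$, $(n-6,0,2,0,\dots)$, $(n-2,1,0,\dots)$, $(n-5,1,1,0,\dots)$, $(n-4,0,0,1,0,\dots)$ together with a few shapes ($u_2=2$, $u_3=3$, $u_7=1$) that must be shown to satisfy $v(\tau_u)\ge2$ after all; for~(ii) the analogous but longer list additionally contains $(n-12,0,4,0,\dots)$, $(n-4,2,0,\dots)$, $(n-8,1,2,0,\dots)$ and a corresponding family of ``almost surviving'' shapes to be discarded.

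For the ``pure'' survivors, those supported on the parts $1$ and $3$ (so $\dot{n}=0$), I would compute $\tau_u$ directly. By \eqref{2} and Lemma~\ref{thm 1}(i) one has $n+d-2=2(u_1+2u_3-1)$, so, using $(2m)!=2^m\,m!\,(2m-1)!!$,
\begin{align*}
\tau_u&=(-1)^{d-1}\frac{(2(u_1+2u_3-1))!}{2^{u_1}u_1!\,4^{u_3}u_3!}\\
&=(-1)^{d-1}\,\frac{(u_1+2u_3-1)!}{2\,u_1!\,u_3!}\,(2u_1+4u_3-3)!!.
\end{align*}
One then reduces the double factorial by Lemmas~\ref{AHR Lem 4.1} and~\ref{lemma 1} and the ordinary factorial ratio by Lemma~\ref{lemma 4}(i) (or, for $u_3\le2$, by a one-line computation), carrying the sign $(-1)^{d-1}$ along. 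In particular the leading shape $(n,0,\dots)$ gives $\tau_u=g(n)$ in the notation of Lemma~\ref{lemma add}, and Lemma~\ref{AHR Lem 4.1}(ii) when $v(n)=1$, together with Lemmas~\ref{lemma 1}(iii) and~\ref{lemma add} when $v(n)\ge2$, determine it to the stated precision. For the mixed survivors ($\dot{n}>0$) I would isolate the large part $u_1\approx n$ and apply Lemma~\ref{lemma 4}: with $u_1$ playing the role of $l+q$ (of $2$-adic valuation $\ge v(n)$), this replaces $(n+d-2)!/\gamma_u$ by the quantity attached to $\dot{u}$, which is a tiny partition whose associated ``base'' factorial is evaluated by hand. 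The branch $v(n)=1$ versus $v(n)\ge2$ enters only through which clause of Lemmas~\ref{lemma 1},~\ref{lemma 4} and~\ref{lemma add} is invoked.

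The main obstacle is the bookkeeping at the threshold $v(\tau_u)=2$ (respectively $3$). Several candidate shapes ($u_2=2$, $u_3=3$, $u_7=1$ in~(i); more of them in~(ii), e.g.\ $u_3=3$, $u_7=1$, $u_2=3$, $u_2=u_4=1$, $u_5=1$) have $v(\tau_u)$ generically exactly at the cut-off, and one must show they are actually above it by writing $v(\tau_u)=v((n+d-2)!)-v(\gamma_u)$ in terms of base-$2$ digit sums and using the hypothesis on $v(n)$; for instance $v(n)=1$ means $n=2k$ with $k$ odd, whence $(4k-3)!!\equiv1\pmod{16}$ by Lemma~\ref{AHR Lem 4.1}(ii) and the digit sums of $k\pm1$, $k\pm2$ are controlled. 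Each such discard is routine but delicate. A parallel delicacy is that the surviving coefficients $\tfrac{3(n-4)}{4}$, $\tfrac{n-4}{4}$, $\tfrac{n-8}{4}$, $n-4$ may have $2$-adic valuation as low as $-1$, so they must be pinned down to one $2$-adic digit beyond what the crude bound would give; the same applies to the constant term of the $c_1^n$-coefficient, which needs the full strength of Lemma~\ref{lemma 1}(iii). Once these verifications are complete and the signs $(-1)^{d-1}$ are reconciled, summing the surviving monomials yields the two displayed congruences.
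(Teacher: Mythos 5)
Your proposal follows essentially the same route as the paper's proof: Lemma \ref{thm 2} is used to cut the sum down to the finitely many shapes with $\dot{n}\le 7$ and $u_3\le 4$, and each surviving $\tau_u$ is then evaluated explicitly via the factorial/double-factorial identities of Lemmas \ref{AHR Lem 4.1} and \ref{lemma 1}, with the borderline shapes ($u_2=2$, $u_3=3$, $u_7=1$, etc.) checked to lie above the cut-off. The only cosmetic difference is your suggestion to invoke Lemma \ref{lemma 4} for the mixed terms (that lemma compares $n=m+l$ with $m$ and is not really needed here), but the direct computation you also describe is exactly what the paper does, so the argument goes through.
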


\begin{proof}
By Lemma \ref{thm 2}, and noting that $\dot{n}=\sum_{i\neq1,3}
iu_i$, we have only to consider the following cases:

(1) $\dot{n}=7$, $u_7=1$, $u_3\le1$. Then $u_1=n-7-3u_3$.

(2) $\dot{n}=6$, $u_1=n-6$. In this case, we have $u_2=3$, or
$u_2=u_4=1$, or $u_6=1$.

(3) $\dot{n}=5$, $u_5=1$ and $u_1=n-5$.

(4) $\dot{n}=4$, $u_1=n-4-3u_3$ and $u_3\leq 1$. In this case,
either $u_2=2$ or $u_4=1$.

(5) $\dot{n}=2$, $u_1=n-2-3u_3$, $u_2=1$ and  $u_3\leq 2$.

(6) $\dot{n}=0$, $u_3\le4$. In fact, if $u_3\geq 5$, we have
$v(\tau_u)\geq v((2u_3-1)!)-v(u_3!)-1\geq 3$ since $\dot{n}=0$. So
for our purpose, we can assume $u_3\le4$.

For cases (1)-(3) and (4) with $u_3=1$, by the definition of $\tau
_u$, we can easily check that $v(\tau_u)\geq 3$. We omit the details here.

For case (4) with $u_3=0$, if $u_2=2$, then
$v(\tau_u)=v((2n-4)!)-(n-4)-v((n-4)!)-v(2!)=1+v(n-2)=2+v(\frac{n}{2}-1)$,
i.e., $\tau_u \equiv 0\pmod{8}$ for $v(n)=1$ and $\tau_u \equiv
4\pmod{8}$ for $v(n)\geq 2$. If $u_4=1$, then
$\tau_u=(-1)^{n-4}\frac{(2n-5)!}{2^{n-4}(n-4)!5}=2\frac{(n-3)(2n-5)!!}{5}
\equiv (-1)^{\frac{n}{2}}2\pmod{8}.$

For case (5), if $u_3=2$, then
$v(\tau_u)=v((2n-7)!)-(n-4)-v((n-8)!)-v(2!)=v((n-4)(n-6))-1$. Hence
$\tau_u \equiv4\pmod{8}$ unless $n\equiv4$ or $-2\pmod{8}$ where
$\tau_u \equiv0\pmod{8}$; if $u_3=1$, then
$\tau_u=(-1)^{n-4}\frac{(2n-5)!}{2^{n-5}(n-5)!\cdot3\cdot4}
=\frac{(n-3)(n-4)(2n-5)!!}{3}$. It implies that
$\tau_u\equiv0\pmod{8}$ if $v(n)=2$, $\tau_u\equiv4\pmod{8}$ if
$v(n)>2$, and $\tau_u\equiv-n\pmod{8}$ if $v(n)=1$; if $u_3=0$, then
$\tau_u=(-1)^{n-2}\frac{(2n-3)!}{2^{n-2}(n-2)!\cdot3}=\frac{(2n-3)!!}{3}\equiv
3\cdot(-1)^{\frac{n}{2}-1}\pmod{8}.$

For case (6), if $u_3=4$, then
$\tau_u=(-1)^{n-9}\frac{(2n-10)!}{2^{n-12+8}(n-12)!\cdot4!}
=-\frac{(n-5)\cdots(n-11)(2n-11)!!}{2^{4}3}$. Therefore
$\tau_u\equiv0\pmod{4}$ if $v(n)=1$ and
$\tau_u\equiv\frac{n}{4}-2\pmod{8}$ if $v(n)\geq2$; if $u_3=3$, then
$v(\tau_u)=v((2n-8)!)-(n-9+6)-v((n-9)!)-v(3!)=v((n-4)(n-6)(n-8)-2\geq3$
unless $n\equiv2\pmod{8}$ where $v(\tau_u)=2$; if $u_3=2$, then we
can derive that
$\tau_u=-\frac{(2n-6)!}{2^{n-2}(n-6)!\cdot2}=-\frac{(n-3)(n-4)(n-5)(2n-7)!!}{4}$.
So $\tau_u\equiv\frac{3(n-4)}{4}\pmod{4}$ if $v(n)=1$ and
$\tau_u\equiv\frac{n-4}{4}\pmod{8}$ if $v(n)\geq2$; if $u_3=1$, then
$\tau_u=-\frac{(2n-4)!}{2^{n-1}(n-3)!}=-\frac{(n-2)(2n-5)!!}{2}$.
Thus $\tau_u\equiv-\frac{n-2}{2}\pmod{8}$ if $v(n)=1$ and
$\tau_u\equiv-3\frac{n-2}{2}\pmod{8}$ if $v(n)\geq2$. If $u_3=0$,
then $\tau_u=-\frac{(2n-3)!}{2n}$. By Lemma \ref{AHR Lem 4.1}, we
have $\tau_u\equiv-\frac{1}{2n}\pmod{4}$ if $v(n)=1$. By Lemma
\ref{001} (iii), we deduce that if $v(n)=2$, then
$\tau_u\equiv\frac{1+(-1)^{\frac{n+4}{8}}16}{2n}\equiv\frac{1}{2n}-2\pmod{8}$,
and  if $v(n)>2$, then
$$
\tau_u\equiv\frac{1+(-1)^{\frac{\frac{n}{2^{v(n)}}-1}{2}}
2^{v(n)+2}}{2n}\equiv\frac{1}{2n}+2\pmod{8}
$$
since $(-1)^{\frac{a-1}{2}}\equiv a \pmod 4$ for any odd number $a$.
Thus Theorem \ref{thm 3} is proved.
\end{proof}

With similar methods, but in a far more complicated fashion, we have
also obtained the following theorem. For reasons of brevity we delete the proof.

\begin{thm}\label{thm 4}
Let $n=m+l$, $l=k2^N$ with $2\nmid k$. Let $N\ge 3$ and $m\ge
2N+1$.
\begin{enumerate}
\item[(i)]
If $2\nmid m$, then
\begin{align*}
\frac{\widehat{B}_n}{n}\equiv&c_1^l\frac{\widehat{B}_m}{m}
+l(c_1^{n-12}c_3^4+c_1^{n-15}c^5_3+c_1^{n-5}c_2c_3+
c_1^{n-8}c_2c_3^2+c_1^{n-7}c_7)\\
&(-1)^{\frac{m+1}{2}}\frac{l}{2}(-c_1^n+c_1^{n-3}c_3+
c_1^{n-6}c_3^2+c_1^{n-9}c_3^3)
\pmod{2^{N+1}\mathbf{Z}_2[c_1,\ldots ,c_n]}.
\end{align*}
\item[(ii)]
If $2|m$ and $4\nmid m$, then
\begin{align*}
\frac{\widehat{B}_n}{n}\equiv&c_1^l\frac{\widehat{B}_m}{m}
+(l+\frac{l}{2mn})c_1^{n}+l(c_1^{n-9}c_3^3+c_1^{n-18}c_3^6
+c_1^{n-5}c_2c_3+c_1^{n-8}c_2c_3^2)\\
&-\frac{l}{2}c_1^{n-3}c_3 +\frac{3}{4}lc_1^{n-6}c_3^2+\theta
c_1^{n-12}c_3^4 \pmod{2^{N+1} \mathbf{Z}_2[c_1,\ldots ,c_n]},
\end{align*} where $\theta=-\frac{l}{2}$ for $N=3$ and $\theta=\frac{l}{2}$ for
$N\ge4$.
\item[(iii)]
If $4|m$ and $8\nmid m$, then
\begin{align*}
\frac{\widehat{B}_n}{n}\equiv&c_1^l\frac{\widehat{B}_m}{m} +(
l-\frac{l}{2mn})c_1^{n}+\frac{l}{2}c_1^{n-3}c_3
+\frac{l}{4}(c_1^{n-6}c_3^2+c_1^{n-12}c_3^4)\\
&+l(c_1^{n-8}c_2c_3^2+c_1^{n-5}c_2c_3)
\pmod{2^{N+1}\mathbf{Z}_2[c_1,\ldots ,c_n]}.
\end{align*}
\item[(iv)]
If $8|m$, then
\begin{align*}
\frac{\widehat{B}_n}{n}\equiv&c_1^l\frac{\widehat{B}_m}{m}
-\bigg(\frac{(2n-3)!!}{2n}-\frac{(2m-3)!!}{2m}\bigg)c_1^{n}+\frac{l}{2}c_1^{n-3}c_3
+\frac{5l}{4}c_1^{n-12}c_3^4\\
&+\frac{l}{4}c_1^{n-6}c_3^2+l(c_1^{n-24}c_3^8+c_1^{n-5}c_2c_3+c_1^{n-8}c_2c_3^2)
\pmod{2^{N+1}\mathbf{Z}_2[c_1,\ldots ,c_n]}.
\end{align*}
\end{enumerate}
\end{thm}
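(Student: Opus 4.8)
The plan is to follow the strategy of the proof of Theorem \ref{thm 0}, now with $p=2$, comparing $\widehat{B}_n/n=\sum_{\omega(u)=n}\tau_uc^u$ of (\ref{1}) termwise with $c_1^l\widehat{B}_m/m$. For a partition $u$ with $\omega(u)=n$ and $u_1\ge l$, put $u'_1=u_1-l$ and $u'_i=u_i$ for $i\ne1$; then $\omega(u')=m$ and $c^u=c_1^lc^{u'}$, so
$$\frac{\widehat{B}_n}{n}-c_1^l\frac{\widehat{B}_m}{m}=\sum_{\omega(u)=n,\ u_1\ge l}(\tau_u-\tau_{u'})c^u+\sum_{\omega(u)=n,\ u_1<l}\tau_uc^u,$$
and it suffices to show that, modulo $2^{N+1}$, all coefficients $\tau_u-\tau_{u'}$ (for $u_1\ge l$) and $\tau_u$ (for $u_1<l$) vanish, apart from those of the monomials listed in the statement, whose coefficients must be evaluated.

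First I would cut the problem down to finitely many small partitions. Recall $\dot u$ (with $\dot u_1=\dot u_3=0$ and $\dot u_i=u_i$ otherwise), $\dot n=\omega(\dot u)=n-u_1-3u_3$, and $\gamma_{\dot u}=\varepsilon 2^e$ with $\varepsilon$ odd. By Lemma \ref{thm 2}, $v(\tau_u)\ge u_3+\lceil\dot n/2\rceil-1$ when $\dot n>0$, with the single worse bound $v(\tau_u)\ge u_3+\lceil\dot n/2\rceil-3$ for $\dot n=7u_7$; since the partition $u'$ attached to $u$ carries the same $\dot u$, these estimates also bound $v(\tau_u-\tau_{u'})$. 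Transcribing the $n+d-2\ge 2l$ analysis of Theorem \ref{thm 0} to $p=2$ (via Lemma \ref{thm 1} and the identity $v(2l)=N+1$, using that when $u_1<l$ the smallness of $x:=l-u_1$ against $l\equiv0\pmod{2^N}$ forces extra $2$-divisibility of $(n+d-2)!/\gamma_u$), and invoking $m\ge 2N+1$, one finds every monomial with $\dot n\notin\{0,2,7\}$, or with $u_3$ outside a short explicit range, has coefficient $\equiv0\pmod{2^{N+1}}$. What remains are the family $\dot n=7$ ($u_7=1$, $u_3\le1$), the family $\dot n=2$ ($u_2=1$, $u_3$ small), the family $\dot n=0$ (where $u$ is $u_1=n-3j$, $u_3=j$), and the corresponding $m$-side partitions.

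For the families $\dot n\in\{2,7\}$ I would argue exactly as in Theorem \ref{thm 0}. With Lemma \ref{thm 1} to write $n+d-2=2(u_1+2u_3+e)+\Gamma$ one has $\tau_u=\dfrac{(-1)^{d-1}}{\varepsilon}\cdot\dfrac{(2(u_1+2u_3+e)+\Gamma)!}{2^{u_1+2u_3+e}\,u_1!\,u_3!}$; for $u_1\ge l$ the shift congruences of Lemma \ref{lemma 4}(ii)--(iv) (with $q=u_1-l$, $r=u_3$, and $a$ read off from $\Gamma$ and $e$) yield $\tau_u\equiv\tau_{u'}\pmod{2^{N+1}}$ except on a short list of small partitions, where the defect --- the $\delta_r=l$ term of Lemma \ref{lemma 4} when $u_3\in\{1,2\}$, or the loss of one power of $2$ when the effective $a$ falls short of $2(e+1)$ --- produces the monomials $c_1^{n-7}c_7$, $c_1^{n-5}c_2c_3$ and $c_1^{n-8}c_2c_3^2$ with coefficient $l$. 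The remaining small leftovers, and the $u_1<l$ instances (which have no companion $u'$), are computed straight from (\ref{2}) with Lemmas \ref{AHR Lem 4.1} and \ref{lemma 1}, just as the matching cases of Theorem \ref{thm 3}.

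The crux is the family $\dot n=0$, i.e.\ $u_1=n-3j$, $u_3=j$. Here a short simplification --- using $(2a-2)!=2^{a-1}(a-1)!(2a-3)!!$ and $(2j)!/j!=2^j(2j-1)!!$ --- gives the closed form
$$\tau_u=(-1)^j\binom{n-j}{2j}2^j(2j-1)!!\,g(n-j),\qquad g(a)=(-1)^{a-1}\frac{(2a-3)!!}{2a},$$
with $g$ as in Lemma \ref{lemma add}, and similarly $\tau_{u'}=(-1)^j\binom{m-j}{2j}2^j(2j-1)!!\,g(m-j)$, whence
$$\tau_u-\tau_{u'}=(-1)^j2^j(2j-1)!!\Big[\binom{n-j}{2j}g(n-j)-\binom{m-j}{2j}g(m-j)\Big].$$
Since $(n-j)-(m-j)=l=k2^N$, I would expand the binomial difference by Vandermonde (with $v(\binom{l}{i})=N-v(i)$ for $1\le i<2^N$) and substitute the explicit value of $g(n-j)-g(m-j)$ from Lemma \ref{lemma add}, which depends only on the residue of $m$ modulo $8$ --- this is precisely why the conclusion splits into the four cases $v(m)=0$, $v(m)=1$, $v(m)=2$, $v(m)\ge3$. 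Collecting terms shows $\tau_u-\tau_{u'}\equiv0\pmod{2^{N+1}}$ for all $j$ beyond the short list attached to the relevant case, and equals the displayed multiple of $l$ for the surviving $j$; the coefficient of $c_1^n$ ($j=0$, binomials $=1$) is $g(n)-g(m)=-\big(\tfrac{(2n-3)!!}{2n}-\tfrac{(2m-3)!!}{2m}\big)$, kept in that form when $v(m)\ge3$ and reduced via Lemma \ref{lemma add} otherwise, and monomials $c_1^{n-3j}c_3^j$ with $n-3j<l$ (no $u'$) are handled by the same closed form, which gives the same residue. I expect the main obstacle to be this last step: unlike the rest it is not a black-box use of Lemmas \ref{thm 1}--\ref{thm 2}, and it demands (a) deriving and controlling the closed form for $\tau_u$, (b) a congruence for $\binom{n-j}{2j}-\binom{m-j}{2j}$ to a sufficiently high power of $2$ --- delicate because the factors $g$ have negative valuation and so erode the modulus --- and (c) careful bookkeeping of the four residue classes of $m$ modulo $8$, whose $g$-congruences have genuinely different shapes and dictate which $c_1^{n-3j}c_3^j$ survive; the remainder follows the familiar pattern of Kummer-type cancellation via the shift lemma together with a finite list of hand-checked exceptional monomials, already used in Theorems \ref{thm 0} and \ref{thm 3}, the one new feature being that the first-order-in-$l$ corrections are retained rather than discarded.
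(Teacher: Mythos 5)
Your proposal follows the paper's proof in essentially the same way: the same splitting of $\widehat{B}_n/n-c_1^l\widehat{B}_m/m$ into the $u_1\ge l$ and $u_1<l$ sums, the same reduction to the exceptional families $\dot n\in\{0,2,7\cdot2^\alpha\}$ via Lemmas \ref{thm 1} and \ref{thm 2} and the shift congruences of Lemma \ref{lemma 4}, and, for the crucial $\dot n=0$ family, your closed form $\tau_u=(-1)^j\binom{n-j}{2j}2^j(2j-1)!!\,g(n-j)$ is exactly the paper's (\ref{equantion 12})--(\ref{equantion 13}), with your Vandermonde expansion of $\binom{n-j}{2j}-\binom{m-j}{2j}$ playing the role of the paper's expansion of the polynomial difference $A=\sum_i A_i$ in powers of $l$ followed by the reduction $A\equiv A_1$. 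The one caution, consistent with the difficulty you yourself flag: substituting $g(n-j)-g(m-j)$ from Lemma \ref{lemma add} is unavailable when $8\mid(m-j)$ (the lemma has no such case, and in your two-term splitting the factor $g(m-j)$ then has large negative valuation), so for those $j$ you must keep the product intact --- expand only the polynomial factor in $l$ and treat the double factorial as a unit modulo $2^{N+1}$ --- which is precisely how the paper's $A_1$ computation and its direct treatment of $u_3=1,2$ avoid the issue.
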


{\bf Acknowledgements.}\\

The authors would like to thank Professor Shparlinski and the anonymous referee for their
careful reading of the manuscript and for helpful comments and suggestions that improved
its presentation.

\end{document}